\documentclass[12pt]{article}

\setlength\parindent{0pt} 

\usepackage[letterpaper, margin=1in]{geometry} 
 

\usepackage{mathrsfs,amsmath,amsfonts,amssymb,latexsym,amsthm,color,
                         graphicx,graphics,float,subfigure,epsf,enumerate,hyperref,algorithm,algorithmic,multirow}
 \setlength{\parskip}{1em} 


 
\setlength{\textheight}{8.5truein}
\setlength{\textwidth}{6.5truein}


\newtheorem{theorem}{Theorem}[section]
 \newtheorem{corollary}[theorem]{Corollary}
 \newtheorem{lemma}[theorem]{Lemma}
  \newtheorem{assumption}[theorem]{Assumption}
 \newtheorem{proposition}[theorem]{Proposition}

 \theoremstyle{Definition}
 \newtheorem{definition}[theorem]{Definition}
 \theoremstyle{remark}
 \newtheorem{remark}[theorem]{Remark}
   \newtheorem{exercise}{Exercise}
 \numberwithin{equation}{section}


\newcommand{\R}    [1]{\mathbb{R}^{#1}}
\newcommand{\st}      {{\rm{s.t.}}}


\newcommand{\sign}{{\rm{sign}}}


\newcommand{\back}{\begin{acknowledgements}} \newcommand{\eack}{\end{acknowledgements}}
\newcommand{\balg}{\begin{algorithm}}    \newcommand{\ealg}{\end{algorithm}}
\newcommand{\balc}{\begin{algorithmic}}  \newcommand{\ealc}{\end{algorithmic}}
\newcommand{\bali}{\begin{aligned}}      \newcommand{\eali}{\end{aligned}}
\newcommand{\barr}{\begin{array}}        \newcommand{\earr}{\end{array}}
\newcommand{\bass}{\begin{assumption}}   \newcommand{\eass}{\end{assumption}}
\newcommand{\bbma}{\begin{bmatrix}}      \newcommand{\ebma}{\end{bmatrix}}
\newcommand{\bcas}{\begin{cases}}        \newcommand{\ecas}{\end{cases}}
\newcommand{\bcen}{\begin{center}}       \newcommand{\ecen}{\end{center}}
\newcommand{\bcol}{\begin{column}}       \newcommand{\ecol}{\end{column}}
\newcommand{\bcos}{\begin{columns}}      \newcommand{\ecos}{\end{columns}}
\newcommand{\bcon}{\begin{condition}}    \newcommand{\econ}{\end{condition}}
\newcommand{\bcor}{\begin{corollary}}    \newcommand{\ecor}{\end{corollary}}
\newcommand{\bdfn}{\begin{definition}}   \newcommand{\edfn}{\end{definition}}
\newcommand{\benu}{\begin{enumerate}}    \newcommand{\eenu}{\end{enumerate}}
\newcommand{\bequ}{\begin{equation}}     \newcommand{\eequ}{\end{equation}}
\newcommand{\benn}{\begin{equation*}}    \newcommand{\eenn}{\end{equation*}}
\newcommand{\bexa}{\begin{example}}      \newcommand{\eexa}{\end{example}}
\newcommand{\bfig}{\begin{figure}}       \newcommand{\efig}{\end{figure}}
\newcommand{\bfra}{\begin{frame}}        \newcommand{\efra}{\end{frame}}
\newcommand{\bite}{\begin{itemize}}      \newcommand{\eite}{\end{itemize}}
\newcommand{\blem}{\begin{lemma}}        \newcommand{\elem}{\end{lemma}}
\newcommand{\bmat}{\begin{matrix}}       \newcommand{\emat}{\end{matrix}}
\newcommand{\bpma}{\begin{pmatrix}}      \newcommand{\epma}{\end{pmatrix}}
\newcommand{\bpic}{\begin{picture}}      \newcommand{\epic}{\end{picture}}
\newcommand{\bpro}{\begin{proof}}        \newcommand{\epro}{\end{proof}}
\newcommand{\bprp}{\begin{proposition}}  \newcommand{\eprp}{\end{proposition}}
\newcommand{\brem}{\begin{remark}}       \newcommand{\erem}{\end{remark}}
\newcommand{\bsub}{\begin{subequations}} \newcommand{\esub}{\end{subequations}}
\newcommand{\btab}{\begin{table}}        \newcommand{\etab}{\end{table}}
\newcommand{\btar}{\begin{tabular}}      \newcommand{\etar}{\end{tabular}}
\newcommand{\bthe}{\begin{theorem}}      \newcommand{\ethe}{\end{theorem}}
\newcommand{\bvma}{\begin{vmatrix}}      \newcommand{\evma}{\end{vmatrix}}
\newcommand{\bexe}{\begin{exercise}}      \newcommand{\eexe}{\end{exercise}}

\newcommand{\beq}{\begin{equation}}
\newcommand{\eeq}{\end{equation}}
\newcommand{\bequation}{\begin{equation}}
\newcommand{\eequation}{\end{equation}}
\newcommand{\bproof}{\begin{proof}}
\newcommand{\eproof}{\end{proof}}
\newcommand{\benumerate}{\begin{enumerate}}   \newcommand{\eenumerate}{\end{enumerate}}
\newcommand{\bitem}{\begin{itemize}}
\newcommand{\eitem}{\end{itemize}}
\newcommand{\bassumption}{\begin{assumption}}
\newcommand{\eassumption}{\end{assumption}}
\newcommand{\bprop}{\begin{proposition}}
\newcommand{\eprop}{\end{proposition}}
\newcommand{\bal}{\begin{aligned}}
\newcommand{\eal}{\end{aligned}}
\newcommand{\baligned}{\begin{aligned}}
\newcommand{\ealigned}{\end{aligned}}
\newcommand{\bseq}{\begin{subequations}}
\newcommand{\eseq}{\end{subequations}}
\newcommand{\bsubequations}{\begin{subequations}}
\newcommand{\esubequations}{\end{subequations}}
\newcommand{\bcases}{\begin{cases}}
\newcommand{\ecases}{\end{cases}}
\newcommand{\bbmatrix}     {\begin{bmatrix}}
\newcommand{\ebmatrix}     {\end{bmatrix}}
\newcommand{\btheorem}{\begin{theorem}}
\newcommand{\etheorem}{\end{theorem}}





\newcommand{\Ical}{{\cal I}}

\newcommand{\Ucal}{{\cal U}}




\newcommand{\bcls}{\begin{columns}}
\newcommand{\bcl}[1]{\begin{column}{#1\textwidth}}
\newcommand{\ecls}{\end{columns}}
\newcommand{\ecl}{\end{column}}

%
%
\usepackage{bm}

\def\bv{{\bm v}}

\def\bx{{\bm x}}

\def\bz{{\bm z}}

\def\bfo{{\boldsymbol 1}}

\def\st{{\rm subject\ to}}
\def\sign{{\rm sign}}

\def\R{{\mathbb R}}
\def\B{{\mathbb B}}

\def\Ical{{\mathcal I}}

\def\Ucal{{\mathcal U}}


%
%
%
%
%

\title{A unified approach for projections onto the intersection of $\ell_1$ and $\ell_2$ balls or spheres}

\author{Hongying Liu\thanks{School of Mathematical Sciences, 
               Beihang University,  
               Beijing 100083, China,  
              Tel.: +86-21-82316408,  
              Email: {liuhongying@buaa.edu.cn} }, \and Hao Wang\thanks{School of Information Science and Technology,
                ShanghaiTech University, 
                 Shanghai 201210, China, 
               Tel.:  +86-21-20685389, 
              Email: {wanghao1@shanghaitech.edu.cn}}, \and Mengmeng Song\thanks{
              School of Mathematical Sciences, Beihang University, Beijing 100083, China,  Tel.: +86-21-82316408, 
              Email: {songmengmeng@buaa.edu.cn} }
   }


\begin{document}

\maketitle

\begin{abstract}
This paper focuses on designing a unified approach for computing the
projection   onto the intersection of an $\ell_1$ ball/sphere  and an $\ell_2$ ball/sphere.
We show that the major computational efforts of solving these problems all
rely on finding the root of the same piecewisely quadratic function, and then propose
a unified numerical method to compute  the root.
In particular, we design breakpoint search methods with/without sorting incorporated with bisection, secant and Newton methods to
 find the interval
containing the root, on which the root has a closed form.
It can be shown that our proposed algorithms without sorting possess $O(n \log n)$ worst-case complexity and $O(n)$ in practice. The efficiency of our proposed algorithms are demonstrated in  numerical experiments.

{\bf Keywords}: {projection ,  intersection ,  $\ell_1$ ball, breakpoint search,  principle component analysis}

\end{abstract}


\section{Introduction}

In this paper, we consider designing  a unified  numerical method for computing  the solution of the following three types of problems:
 projection onto the intersection of an $\ell_1$ ball and an $\ell_2$ ball:

\begin{equation}\label{prob.b1b2.0}
\mathop{\rm minimize}\limits_{\bx\in \R^n}   \tfrac{1}{2}\|\bx-\bv\|_2^2\quad \st \ \bx\in \mathbb{B}_1^t \cap \mathbb{B}_2,
\end{equation}
 projection onto the intersection of an $\ell_1$ ball and an $\ell_2$ sphere:

\begin{equation}\label{prob.b1s2.0}
\mathop{\rm minimize}\limits_{\bx\in \R^n}   \tfrac{1}{2}\|\bx-\bv\|_2^2\quad \st \  \bx\in\mathbb{B}_1^t\cap\mathbb{S}_2,
\end{equation}
and 
 projection onto the intersection of an $\ell_1$ sphere and  an $\ell_2$ sphere:
\begin{equation}\label{prob.s1s2.0}
\mathop{\rm minimize}\limits_{\bx\in \R^n}   \tfrac{1}{2}\|\bx-\bv\|_2^2\quad \st \  \bx\in\mathbb{S}_1^t\cap\mathbb{S}_2.
\end{equation}
Here the $\ell_2$ (i.e., Euclidean) norm on   $\R^n$ is indicated as $\|\cdot \|_2$ with the unit $\ell_2$
ball (sphere) defined as $\B_2 := \{ \bx\in \R^n: \|\bx\|_2 \le 1\}$   $(\mathbb{S}_2=\{ \bx\in \R^n: \|\bx\|_2 = 1\}$), and the $\ell_1$ norm is indicated as $\|\cdot\|_1$ with the $\ell_1$
ball  (sphere)  with radius $t$ denoted as $\B_1^t:=\{\bx\in\R^n:\|\bx\|_1\le t\}$  ($\mathbb{S}_1^t=\{\bx\in \R^n: \|\bx\|_1=t\}$).
Notice that $\|x\|_2\le \|x\|_1 \le \sqrt{n}\|x\|_2$.  
Trivial cases for the problems of interests are: 
(a) $t\le1$, in this case $\|x\|_1 \le t$ implies $\|x\|_2 < 1$, meaning $\B_1^t \subset \B_2$, 
 $\mathbb{S}_1^t \cap \B_2 = \emptyset$ and $\mathbb{S}_1^t \cap \mathbb{S}_2 = \emptyset$. 
(b)  $t \ge \sqrt{n}$, in this case $\|x\|_2\le 1$ implies $\|x\|_1 < t$, meaning $\B_2 \subset \B_1^t$, 
 $\mathbb{S}_1^t \cap \B_2 = \emptyset$ and $\mathbb{S}_1^t \cap\mathbb{S}_2 = \emptyset$.   Without loss of generality, we assume
$1\le t \le \sqrt{n}$ in the remainder of this paper.

Problems \eqref{prob.b1b2.0} \eqref{prob.b1s2.0} and \eqref{prob.s1s2.0}  
  arise widely in modern science, engineering and business.  For example, the gradient projection
  methods for  Sparse Principal Component Analysis (sPCA)\cite{Jolliffe03,Luss11,Sigg08,Witten09}  often
 involve problems of \eqref{prob.b1b2.0} or \eqref{prob.s1s2.0}, and \eqref{prob.s1s2.0} is  also an integral part in efficient sparse non-negative matrix factorization \cite{Hoyer04,Potluru13}, supervised online autoencoder intended for classification using neural networks that features sparse
activity and sparse connectivity \cite{Thom15},  and  dictionary learning with sparseness-enforcing projections \cite{Thom15}.
 Problem  
  \eqref{prob.b1s2.0} often arises in Sparse Generalized Canonical Correlation Analysis (SGCCA)\cite{Tenenhaus14}, and 
 Witten et al. \cite{Witten09} use \eqref{prob.b1s2.0}  
 for computing  the rank-1
approximation for a given matrix along with a block coordinate decent method, which can be applied to sparse principal components and canonical correlation.

 Our contribution in this paper can be summarized as follows:

\begin{itemize}
\item We propose a unified analysis for solving these problems. Specifically, we show that
their solutions can all be determined  by the root of a  piecewisely quadratic  auxiliary function.

\item A series of  properties of the proposed auxiliary function are provided, which provide
detailed characterization of the solutions of these problems.

\item A unified method with/without sorting is designed for finding the root of the auxiliary function, which accounts for
the major computational efforts of solving these problems.
\end{itemize}

\subsection{Organization}
In the remainder of this section, we outline our notation and introduce various concepts that will be employed throughout the paper.
In \S\ref{sec.existing}, we discuss the  most related existing problems and algorithms.
In \S\ref{sec.function}, we introduce our proposed auxiliary function  and provide a series of properties of the auxiliary function.
We use the proposed auxiliary function to characterize the optimal solutions in \S\ref{sec.solution}.
A unified  algorithm is proposed in \S\ref{sec.algorithm} for finding the root  of the auxiliary function.
 The results of numerical results  are shown  in \S\ref{sec.experiment}.  Concluding remarks are provided   in \S\ref{sec.conclusion}.

\subsection{Notation}

 For any  $\bx\in \R^n$, let  $x_i$ be the $i$-th element of $\bx$  and $\R^n_+$ be the nonnegative orthant of $\R^n$, i.e.,
$\R^n_+ := \{ \bx\in\R^n: x_i \ge 0, i=1,\ldots, n\}$.
Denote the  soft thresholding operator in $\R^n$ with threshold $\lambda>0$ by $S_\lambda(\cdot)$, i.e.,  for any $\bx\in\R^n$,
$(S_\lambda(\bx))_i = \sign(x_i)\max(|x_i|-\lambda)$
for $i=1,\ldots,n$.
Given $\bv\in\mathbb{R}^n$, denote $\bv^+$ as the projection of $\bv$ onto the nonnegative orthant $\mathbb{R}^n_+$,
i.e. $\bv^+=\max(\bv, 0)$.
The $\ell_{p,q}$ norm  of $\bx$  is defined as
$\|\bx\|_{p,q} =  \left(  \sum_{i=1}^G \|\bx_i\|_q^p  \right)^{1/p}$ with $\bx_i\in\mathbb{R}^{n_i}$ and $\sum_{i=1}^G n_i = n$,
where $G$ is the number of groups.   For a compact set $\Omega\subset\mathbb{R}^n$ and $\bv\in\mathbb{R}^n$,
denote 
$\text{proj}_{\Omega}(\bv) = \arg\min_{\bx\in\mathbb{R}^n}\|\bv - \bx\|_2^2$.
The function $\phi: \mathbb{R}^n\to \bar{\mathbb{R}}:=\mathbb{R}\cup\{+\infty\}$ is convex, then the subdifferential of $\phi$ at
$\bar \bx$ is given by
$\partial \phi(\bar \bx) : = \{  \bz\in \R^n \mid \phi(\bar \bx) + \bz^T(\bx-\bar \bx) \le \phi(\bx), \forall \bx\in\mathbb{R}^n\}.$

Denote ${\bf 1}\in \R^n$ be the vector of all ones.  The largest  and the  second-largest  of $\bv$ are denoted as $v_{\max} = \max(v_1, ..., v_n)$ and
$v_{\text{2nd-max}}$, respectively.
To simplify the analysis, we assume $\lambda_j, j=1,\ldots,k$ are  the  $k$   distinct
components of $\bv$ such that $\lambda_1 >\ldots > \lambda_k$ with $\lambda_1 = v_{\max}$ and $\lambda_{k+1}=-\infty$.

\section{Related methods}\label{sec.existing}
We discuss the most related works in this section.

{\bf Projection onto $\ell_1$ ball}.
  As for projection onto a single $\ell_1$ ball, many  algorithms    have emerged. It can be shown 
   \cite{Duchi08,Liu09,Songsiri11} that
     the projection of $\bv$ onto $\mathbb{B}_1^t$ can be characterized by
    the root of the auxiliary function
\[ \psi(\lambda)  :=  \sum_{i=1}^n \max(v_i-\lambda,0) -t = \| (\bv - \lambda \boldsymbol{1})^+\|_1 - t.\]
The properties   of $\psi$ are summarized  in the proposition below.

\begin{proposition}\label{lem.phi}
Function $\psi$ is continuous, strictly decreasing and piecewisely linear on $(-\infty,v_{\max}]$ with
breakpoints $\lambda_1, \ldots, \lambda_k$, and $\psi\equiv -t < 0$ for any $\lambda \ge v_{\max}$.
\end{proposition}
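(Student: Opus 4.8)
The plan is to decompose $\psi$ into a sum of simple building blocks and read off all four claimed properties from the elementary behavior of each block. Writing $\psi(\lambda) = \sum_{i=1}^n g_i(\lambda) - t$ with $g_i(\lambda) := \max(v_i - \lambda, 0)$, I note that each $g_i$ is continuous and piecewise linear in $\lambda$: it equals the affine function $v_i - \lambda$ (slope $-1$) for $\lambda \le v_i$ and is identically $0$ for $\lambda \ge v_i$, with its single kink at $\lambda = v_i$. Since a finite sum of continuous piecewise-linear functions is continuous and piecewise linear, $\psi$ inherits these properties, and its kinks can occur only at the values $v_1, \ldots, v_n$; collapsing repeated values leaves exactly the distinct values $\lambda_1 > \cdots > \lambda_k$ as the candidate breakpoints.

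First I would compute the slope of $\psi$ on each open linear piece. On any interval containing no $v_i$, the function $\psi$ is differentiable with $\psi'(\lambda) = -\,\lvert\{i : v_i > \lambda\}\rvert$, the negative count of components still in their decreasing regime. To confirm that the breakpoints are genuine and are precisely $\lambda_1, \ldots, \lambda_k$, I observe that as $\lambda$ crosses a distinct value $\lambda_j$ from below, the count $\lvert\{i : v_i > \lambda\}\rvert$ drops by exactly the multiplicity of $\lambda_j$ in $\bv$, so the one-sided slopes differ and $\lambda_j$ is a true slope-change point.

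Next I would establish strict monotonicity on $(-\infty, v_{\max}]$. For any $\lambda < v_{\max} = \lambda_1$, the component(s) attaining $v_{\max}$ satisfy $v_i > \lambda$, so wherever $\psi$ is differentiable its slope obeys $\psi'(\lambda) \le -1 < 0$. Combined with continuity, a strictly negative slope on every subinterval of $(-\infty, v_{\max})$ forces $\psi$ to be strictly decreasing on the closed interval $(-\infty, v_{\max}]$. Finally, for $\lambda \ge v_{\max}$ every summand vanishes, since $v_i \le v_{\max} \le \lambda$ gives $g_i(\lambda) = 0$, whence $\psi(\lambda) = -t$; the standing assumption $t \ge 1$ renders this value strictly negative.

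I expect no serious obstacle, as the result is elementary. The only point requiring genuine care is the strict monotonicity at the breakpoints, where $\psi$ fails to be differentiable; this is handled cleanly by the continuity-plus-negative-slope argument rather than by pointwise differentiation. A secondary bookkeeping point is verifying that the slope truly changes at each $\lambda_j$ so that no distinct value is a spurious breakpoint, which the multiplicity computation above settles.
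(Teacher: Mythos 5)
Your proof is correct, but there is nothing in the paper to compare it against: the paper states this proposition without proof, treating it as a known fact about the $\ell_1$-ball projection imported from \cite{Duchi08,Liu09,Songsiri11}. Your argument --- decomposing $\psi$ into the summands $\max(v_i-\lambda,0)$, computing the slope $-\left|\{i : v_i>\lambda\}\right|$ on each linear piece, verifying via multiplicities that the slope genuinely changes at each distinct value $\lambda_j$, and obtaining strict decrease on $(-\infty, v_{\max}]$ from continuity plus negative slope off the finite breakpoint set --- is the standard one and cleanly establishes all four claims, with the final strict inequality $-t<0$ correctly traced to the paper's standing assumption $t\ge 1$.
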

%
By Proposition \ref{lem.phi}, $\psi(\lambda)=0$ has a unique root on $(-\infty,v_{\max}) $ since $\psi(\lambda)\to + \infty$ as $\lambda\to -\infty$ and $\psi(v_{\max})=-t<0$.  The algorithms for computing the $\ell_1$ ball projection are summarized and compared  in \cite{Condat16}, in which
an efficient algorithm is also proposed with worst-case complexity $O(n^2)$ and observed complexity $O(n)$.

{\bf Group ball projection}.
The first  related work is the Euclidean projection onto the intersection of $\ell_1$ and $\ell_{1,q}$ norm balls ($q=2$ or $q=\infty$) proposed by Su et al. \cite{LiFF12}.  With $q=2$ and one group, this problem reverts to \eqref{prob.b1b2.0}.
  They proved that the projection can be reduced to finding the root of an auxiliary function
 \[ \phi_1(\lambda) := \|S_\lambda(\bv)\|_1/\|S_\lambda(\bv)\|_2 -t,\quad \lambda\in [0, v_{\max}).\]
Su et al. \cite{LiFF12} studied the properties of this  auxiliary function, which  are  summarized
 in the following Lemma \ref{lem.LiFF}. Based on this lemma, a bisection algorithm is proposed to find the root of $\phi_1$.

\begin{lemma}[\cite{LiFF12}{ Theorem 1}]\label{lem.LiFF} The following statements hold true:
(i) $\phi_1$ is  continuous piece-wise smooth   on $(0, v_{\max})$;
(ii) $\phi_1$ is monotonically decreasing and  has a unique root in $(0, v_{\max})$.
\end{lemma}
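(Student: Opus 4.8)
The plan is to separate the ratio into its numerator and denominator and to exploit the piecewise structure induced by soft thresholding. By the usual symmetry reduction assume $\bv\ge 0$, and write $N(\lambda):=\onorm{S_\lambda(\bv)}$ and $D(\lambda):=\tnorm{S_\lambda(\bv)}$, so that $\phi_1=N/D-t$. For $\lambda\in(0,v_{\max})$ the active set $I(\lambda):=\{i: v_i>\lambda\}$ is nonempty (it contains every index attaining $v_{\max}$); on it $(S_\lambda(\bv))_i=a_i:=v_i-\lambda>0$, and $N(\lambda)=\sum_{i\in I(\lambda)}a_i$, $D(\lambda)=(\sum_{i\in I(\lambda)}a_i^2)^{1/2}>0$. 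The key structural fact is that $I(\lambda)$ is constant on each open interval delimited by consecutive values of the $v_i$ and changes only when $\lambda$ crosses such a breakpoint.

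For part (i), I would first fix one such open interval: there $N$ is affine and $D$ is the square root of a strictly positive quadratic, hence both are $C^\infty$ and $D$ is bounded away from zero, so $\phi_1$ is smooth on the interval. Continuity across a breakpoint $\lambda=v_i$ follows because $S_\lambda(\bv)$ varies continuously in $\lambda$ (each entry switched off decays to zero continuously); thus $N$ and $D$ are continuous with $D>0$ throughout $(0,v_{\max})$, giving continuity and piece-wise smoothness.

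The heart of the argument is part (ii). Differentiating on a fixed interval, $a_i'=-1$ gives $N'=-|I(\lambda)|$ and $D'=-N/D$, so the quotient rule yields
\[ \phi_1'(\lambda)=\frac{N'D-ND'}{D^2}=\frac{N(\lambda)^2-|I(\lambda)|\,D(\lambda)^2}{D(\lambda)^3}. \]
The Cauchy--Schwarz inequality gives $N^2=\big(\sum_{i\in I}a_i\big)^2\le |I(\lambda)|\sum_{i\in I}a_i^2=|I(\lambda)|\,D^2$, so $\phi_1'\le0$, with equality exactly when all active $a_i$ are equal. This degeneracy occurs only on the topmost interval, where $I(\lambda)$ consists solely of the $m$ indices attaining $v_{\max}$ and $\phi_1\equiv\sqrt m-t$ is constant; on every lower interval at least two distinct magnitudes are active, so $\phi_1'<0$ strictly. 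Hence $\phi_1$ is nonincreasing on $(0,v_{\max})$ and strictly decreasing below the top breakpoint.

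Finally I would pin down the root using the endpoint values $\lim_{\lambda\to0^+}\phi_1=\onorm{\bv}/\tnorm{\bv}-t$ and $\lim_{\lambda\to v_{\max}^-}\phi_1=\sqrt m-t$: under the nontrivial regime these straddle zero, and continuity together with the strict decrease on the part of $(0,v_{\max})$ where $\phi_1$ takes both signs forces a single crossing. I expect the main obstacle to be precisely the constant top piece: there $\phi_1'$ vanishes, so global strict monotonicity fails, and one must verify that the constant value $\sqrt m-t$ is strictly negative (equivalently $t>\sqrt m$) so that the crossing lies strictly below the top breakpoint and is unique rather than an entire subinterval of roots; establishing the matching positivity $\onorm{\bv}/\tnorm{\bv}>t$ at the left endpoint is the other point requiring the problem's standing assumptions.
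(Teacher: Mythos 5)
Your calculus is sound as far as it goes, but the statement you were asked to prove is false, and the paper never proves it: Lemma~\ref{lem.LiFF} is quoted from \cite{LiFF12} precisely so that the paper can \emph{refute} part (ii) in the remark immediately following it, via two counterexamples. The two ``points requiring the problem's standing assumptions'' that you defer at the end are not loose ends closed by hypotheses elsewhere --- the lemma is stated unconditionally, and each of your flagged conditions fails in one of the paper's examples. With $\bv=(1,0)$ and $t=1.2$ (the paper's Example 1), the top interval is all of $(0,v_{\max})$ and $\phi_1\equiv\sqrt{m}-t=-0.2$: your left-endpoint positivity $\onorm{\bv}/\tnorm{\bv}>t$ fails and $\phi_1$ has no root at all. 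With $\bv=(1,1,0)$ and $t=\sqrt{2}$ (Example 2), one has $t=\sqrt{m}$, so the constant top piece is identically zero and \emph{every} point of $(0,1)$ is a root: uniqueness fails. Moreover, as your own derivative computation shows, $\phi_1'$ vanishes identically on $[v_{\text{2nd-max}},v_{\max})$, so strict monotone decrease also fails there; $\phi_1$ is only nonincreasing, constant on the top piece.

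The correct conclusion from your argument is therefore not a proof but a corrected statement: part (i) holds (your argument for it is fine), while part (ii) holds only under the extra hypotheses you yourself identified, namely $\onorm{\bv}>t\tnorm{\bv}$ (so $\phi_1(0^+)>0$) and $t>\sqrt{m}$, where $m$ is the multiplicity of $v_{\max}$ (in the paper's notation, $I_1<t^2$). These are exactly the conditions that appear in the paper's subsequent corrected treatments: the hypothesis $\sigma(\bv)<\sigma^*$ in Lemma~\ref{lem.Thom15}, and the case $I_1<t^2$ with $\phi(0)>0$ in Proposition~\ref{pro.psiroot}, where the paper works with the polynomial $\phi=\onorm{(\bv-\lambda\one)^+}^2-t^2\tnorm{(\bv-\lambda\one)^+}^2$ instead of the ratio to avoid the denominator. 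Your Cauchy--Schwarz computation is in substance the right tool and reappears there (in the discriminant analysis of Lemma~\ref{lem.QuadFam}(iv)); the genuine gap is that, having located precisely the two failure modes, you stopped short of recognizing that without those hypotheses the claim is refutable by counterexample rather than provable.
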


\noindent{\bf Remark:}
However, part (ii) of this lemma may not  hold in general.  We show this by the following two counterexamples.

 {\em Example 1.} Consider $n=2, t=1.2$ and $\bv=(1,0)$.  Then for $\lambda\in (0,1)$
\[\phi_1(\lambda)=\frac{\max(1-\lambda, 0)+\max(-\lambda, 0)}{\sqrt{(\max(1-\lambda, 0))^2+(\max(-\lambda, 0))^2}}-1.2=-0.2.\]
Obviously, for this instance,   $\phi_1$ has no root on $(0,1)$. Therefore, Lemma~\ref{lem.LiFF} does not hold.

{\em Example 2.} Consider $n=3, t=\sqrt{2}$ and $\bv=(1,1,0)$.  Then for $\lambda\in (0,1)$
\[\phi_1(\lambda)=\frac{2\max(1-\lambda, 0)+\max(-\lambda, 0)}{\sqrt{2(\max(1-\lambda, 0))^2+(\max(-\lambda, 0))^2}}-\sqrt{2}\equiv 0.\]
Clearly, any point in $(0,1)$ is the root of $\phi_1$, so that Lemma~\ref{lem.LiFF} does not hold.

{\bf Sparseness-enforcing projection operator}.
Another related work is the ``sparseness-enforcing projection operator''
 proposed by Hoyer \cite{Hoyer04}, which requires the solution to satisfy
 a normalized smooth ``sparseness measure'' defined by
\[\sigma:\R^n\setminus \{0\}\to [0,1],\ \bv\mapsto (\sqrt{n}-\|\bv\|_1/\|\bv\|_2)/(\sqrt{n}-1).\]
This leads to   solving the problem of \eqref{prob.s1s2.0}.

Theis et al. \cite{Fabian05} shown that the projection  is almost surely unique for $\bv$ drawn from a continuous distribution, and 
if it is unique,  the projection is shown to be     determined by the root of $\phi_1$.
We summarized the results in Lemma \ref{lem.Thom15}. Algorithms for solving \eqref{prob.s1s2.0} mainly include
the alternating projection  method  in \cite{Hoyer04,Fabian05}, the method of Lagrange multipliers based on sorted $\bv$ in \cite{Potluru13},
and the method in \cite{Thom15} based on computing the root of the auxiliary function $\phi_1(\lambda)$.

\begin{lemma}\label{lem.Thom15}(\cite[Lemma 3 in Appendix]{Thom15}) Let $\bv\in\mathbb{R}^n_+\setminus {\mathbb{R}^n_+\cap\mathbb{S}^t_1\cap\mathbb{S}_2}$ be a point such that ${\rm proj}_{\mathbb{R}^n_+\cap\mathbb{S}^t_1\cap\mathbb{S}_2}(\bv)$ is unique and $\sigma(\bv)<\sigma^*$. Then   $\phi_1$ is well defined and the following hold:
\begin{enumerate}
\item[(i)] $\phi_1$ is continuous on $(0, v_{\max})$;
\item[(ii)] $\phi_1$ is differentiable on $(0, v_{\max})\setminus\{v_1, \ldots, v_n\}$.
\item[(iii)] $\phi_1$ is strictly  decreasing on $(0, v_{\rm 2nd-max})$, and is constant  on $[v_{\rm 2nd-max}, v_{\max})$.
\item[(iv)] $\phi_1$ has   a unique root $\alpha^*\in (0, v_{\rm 2nd-max})$, and   ${\rm proj}_{\mathbb{R}^n_+\cap\mathbb{S}^t_1\cap\mathbb{S}_2}(\bv)=\frac{S_{\alpha^*}(\bv)}{\|S_{\alpha^*}(\bv)\|_2}$.
\end{enumerate}
\end{lemma}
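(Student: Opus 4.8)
The plan is to exploit that, since $\bv\in\mathbb{R}^n_+$, soft thresholding reduces to $(S_\lambda(\bv))_i=(v_i-\lambda)^+$, so that on all of $(0,v_{\max})$ the component attaining $v_{\max}$ stays strictly positive and $\|S_\lambda(\bv)\|_2>0$; hence $\phi_1$ is well defined there. Part (i) then follows because $\phi_1$ is a ratio of a continuous numerator and a continuous, strictly positive denominator. For (ii) I would observe that the active set $A(\lambda):=\{i:v_i>\lambda\}$ is locally constant away from the breakpoints $\{v_1,\ldots,v_n\}$; on each such interval $(S_\lambda(\bv))_i=v_i-\lambda$ for $i\in A(\lambda)$ and $0$ otherwise, so both $\|S_\lambda(\bv)\|_1$ and $\|S_\lambda(\bv)\|_2$ are smooth with positive denominator, and the quotient is differentiable.

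The technical core is (iii). On an interval where $A(\lambda)$ is constant of cardinality $m$, set $a(\lambda)=\sum_{i\in A}(v_i-\lambda)$ and $b(\lambda)=\sum_{i\in A}(v_i-\lambda)^2$, so that $\phi_1=ab^{-1/2}-t$. Using $a'=-m$ and $b'=-2a$, a short computation gives $\phi_1'(\lambda)=b^{-3/2}(a^2-mb)$. The Cauchy--Schwarz inequality yields $a^2\le mb$, with equality iff all active entries $v_i$, $i\in A(\lambda)$, are equal. For $\lambda\in(v_{\rm 2nd-max},v_{\max})$ only the unique maximal component is active, so $m=1$, $a^2=b$, and $\phi_1\equiv 1-t$ is constant; for $\lambda\in(0,v_{\rm 2nd-max})$ the active set contains both the maximal and the second-largest entries, which differ, so $a^2<mb$ and $\phi_1'<0$ on each such subinterval. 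With the continuity from (i), this gives strict monotonicity on $(0,v_{\rm 2nd-max})$.

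For (iv) I would first localize the root. Since $\sigma^*$ is the sparseness common to every point of $\mathbb{S}_1^t\cap\mathbb{S}_2$, namely $(\sqrt{n}-t)/(\sqrt{n}-1)$, the hypothesis $\sigma(\bv)<\sigma^*$ unwinds to $\|\bv\|_1/\|\bv\|_2>t$, whence $\lim_{\lambda\to 0^+}\phi_1(\lambda)=\|\bv\|_1/\|\bv\|_2-t>0$; on $[v_{\rm 2nd-max},v_{\max})$ we have $\phi_1\equiv 1-t<0$ because $t>1$. Continuity together with the strict decrease from (iii) then forces a unique root $\alpha^*\in(0,v_{\rm 2nd-max})$. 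To recover the projection formula I would write the KKT system of $\min \tfrac12\|\bx-\bv\|_2^2$ subject to $\sum_i x_i=t$, $\|\bx\|_2^2=1$, $\bx\ge 0$: stationarity gives $(1+\beta)x_i=(v_i-\alpha)^+$, i.e. $\bx=S_\alpha(\bv)/(1+\beta)$; imposing $\|\bx\|_2=1$ fixes $1+\beta=\|S_\alpha(\bv)\|_2$, and imposing $\|\bx\|_1=t$ forces $\phi_1(\alpha)=0$, so $\alpha=\alpha^*$ and $\bx=S_{\alpha^*}(\bv)/\|S_{\alpha^*}(\bv)\|_2$.

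I expect two delicate points. First, the strict-decrease claim in (iii) tacitly needs $v_{\max}>v_{\rm 2nd-max}$: if the two largest entries coincide, a flat stretch of equal active entries appears inside $(0,v_{\rm 2nd-max})$ and Cauchy--Schwarz holds with equality there. I would argue that this non-degeneracy is precisely what the uniqueness-of-projection hypothesis supplies, since a tie in the largest components induces a coordinate symmetry that breaks uniqueness. Second, in (iv) the subtle step is not writing the KKT relations but certifying that the unique admissible candidate $S_{\alpha^*}(\bv)/\|S_{\alpha^*}(\bv)\|_2$ is the global minimizer rather than merely a stationary point; here I would again invoke the uniqueness assumption, concluding that the only KKT point of the stated form must coincide with the projection.
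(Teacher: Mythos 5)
First, a point of context: the paper itself does not prove this lemma --- it is quoted from \cite[Lemma 3 in Appendix]{Thom15} and immediately followed by a cautionary remark. So there is no in-paper proof to match; the closest in-paper analogue is Theorem~\ref{thm.nonneSS}, which the authors prove by Lagrangian duality using their function $\phi$ precisely because the $\phi_1$-based statements from the literature are fragile in degenerate cases. Your generic argument (well-definedness, continuity, the computation $\phi_1'(\lambda)=b^{-3/2}(a^2-mb)$ with Cauchy--Schwarz, and the KKT reconstruction of the projection) is correct whenever $v_{\max}>v_{\text{2nd-max}}$, and is a faithful reconstruction of the intended proof.

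However, your patch for the degenerate case contains a genuine error. You claim that a tie in the largest components ``induces a coordinate symmetry that breaks uniqueness'' of the projection, so that the uniqueness hypothesis supplies $v_{\max}>v_{\text{2nd-max}}$. This is false: a symmetry of $\bv$ only permutes the solution set, and the unique solution can itself be symmetric. Concretely, take $n=4$, $\bv=(1,1,0.9,0)$, $t=\sqrt{2}$. Then $I_1=2=t^2$, so by Theorem~\ref{thm.nonneSS}(ii) the projection onto $\R^n_+\cap\mathbb{S}_1^t\cap\mathbb{S}_2$ is \emph{unique}, namely $(1/\sqrt{2},1/\sqrt{2},0,0)$, which is invariant under swapping the tied coordinates; moreover $\|\bv\|_1=2.9>t\|\bv\|_2=\sqrt{2\cdot 2.81}\approx 2.37$, so $\sigma(\bv)<\sigma^*$ holds as well. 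Thus all hypotheses of the lemma are met, the top two entries tie, and your non-degeneracy claim fails. In this instance $\phi_1\equiv 0$ on $[0.9,1)$: with $v_{\text{2nd-max}}=0.9$ the function is strictly decreasing on $(0,0.9)$ and constant on $[0.9,1)$ as in (iii), but the constant is $0$ rather than $1-t<0$, so your localization step ($\phi_1<0$ near $v_{\max}$) breaks, there is no root in $(0,v_{\text{2nd-max}})$, and the roots form a whole interval --- conclusion (iv) fails outright. (If instead one reads $v_{\text{2nd-max}}$ with multiplicity, then (iii) fails.) This is exactly the degeneracy flagged in the paper's remark after the lemma and in its Example~2, and it is the reason the paper works with $\phi$ and treats the case $I_1=t^2$ separately.

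Two smaller points on (iv): your KKT derivation divides by $1+\beta$, so you must also rule out the branch $1+\beta\le 0$ (the $\mu=-1$ cases of Lemma~\ref{lemma.dualF}(i),(iii)); when $v_{\max}$ is attained uniquely and $t>1$ that branch yields no feasible point of \eqref{prob.s1s2}, so this is repairable, but it should be said. Likewise, ``the projection satisfies KKT'' requires a constraint qualification at the minimizer (the gradients $\mathbf{1}$ and $2\bx$ on the support must be independent), which again can fail exactly in the equal-entries configuration above.
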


\noindent{\bf Remark:} Here the condition $\sigma(\bv)<\sigma^*$ holds if and only $\|\bv^+\|_1>t\|\bv^+\|_2$. Compared with Theorem \ref{thm.nonneSS}, Lemma \ref{lem.Thom15} may not include the situation where the projection is not unique or the projection is unique but $\sigma(\bv)\ge \sigma^*$.

{\bf Projection onto intersection of  an $\ell_1$ ball and  an $\ell_2$ sphere.}
Tenenhaus et al. \cite{Tenenhaus14} provided a close form of the solution \eqref{prob.b1s2.0}. 
The algorithms for solving \eqref{prob.b1s2.0} mainly include the root finding with bisection proposed by \cite{Tenenhaus14} and the root finding
method with sorting $\bv$ by \cite{Gloaguen17}.
Let $\tilde \bv = |\bv| $ and suppose the elements are sorted in descent order.
They analyzed the properties of  $\phi_1(\lambda)$ in the following lemma.

\begin{lemma}\label{lem.Gloaguen}(\cite[Proposition 1]{Gloaguen17}) The following statements hold true.
\begin{itemize}
\item[(i)] $\phi_1$ is continuous and decreasing.
\item[(ii)] Let $n_{\max}$ be the number of elements of $\bv$ equal to $v_{\max}$. For $t\in [\sqrt{n_{\max}}, \sqrt n]$, there exists $i\in \{1,\ldots, n\}$ and $\delta\in [0, \tilde v_i-\tilde v_{i+1})$ such that $\phi_1(\tilde v_i-\delta)=0$.
\item[(iii)] $\delta$ is a solution of a second degree polynomial equation.
\end{itemize}
\end{lemma}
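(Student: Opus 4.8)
The plan is to treat $\phi_1$ as a piecewise-smooth function whose pieces are indexed by the active set of the soft-thresholding operator, and to reduce each of (i)--(iii) to an elementary computation on a single piece together with a continuity/gluing argument. Throughout I would write $w_j(\lambda)=\tilde v_j-\lambda$, so that on the subinterval $\lambda\in(\tilde v_{i+1},\tilde v_i]$ exactly the indices $\{1,\dots,i\}$ survive thresholding and
\[
\phi_1(\lambda)=\frac{\sum_{j=1}^i w_j(\lambda)}{\sqrt{\sum_{j=1}^i w_j(\lambda)^2}}-t .
\]

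For part (i), I would first note that $\|S_\lambda(\bv)\|_2>0$ on $[0,v_{\max})$ because the largest component always survives, so $\phi_1$ is a ratio of continuous functions with nonvanishing denominator and is continuous; continuity across a breakpoint $\tilde v_i$ holds because the term switching off vanishes there. For monotonicity I would differentiate the displayed expression on the interior of each piece, getting
\[
\frac{d}{d\lambda}\frac{\sum_{j=1}^i w_j}{\sqrt{\sum_{j=1}^i w_j^2}}
=\frac{\big(\sum_{j=1}^i w_j\big)^2-i\sum_{j=1}^i w_j^2}{\big(\sum_{j=1}^i w_j^2\big)^{3/2}}\le 0,
\]
where the inequality is exactly Cauchy--Schwarz applied to $(w_1,\dots,w_i)$ and $(1,\dots,1)$. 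Since the derivative is nonpositive on every piece and $\phi_1$ is continuous across breakpoints, $\phi_1$ is (non-strictly) decreasing on its whole domain.

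For part (ii), I would identify the two boundary values and invoke the intermediate value theorem. As $\lambda\to v_{\max}^-$ only the $n_{\max}$ maximal components remain, all with equal $w_j=v_{\max}-\lambda$, so the ratio collapses to $\sqrt{n_{\max}}$ and $\phi_1\to\sqrt{n_{\max}}-t\le0$, precisely because $t\ge\sqrt{n_{\max}}$. At the opposite end the ratio $\|S_\lambda(\bv)\|_1/\|S_\lambda(\bv)\|_2$ is at most $\sqrt n$, and the hypothesis $t\le\sqrt n$ (together with the standing assumption that the projection is nontrivial) makes the value of $\phi_1$ there nonnegative; continuity and the monotonicity from (i) then produce a root $\lambda^\ast$. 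Locating it between consecutive sorted values, $\tilde v_{i+1}<\lambda^\ast\le\tilde v_i$, and setting $\delta=\tilde v_i-\lambda^\ast\in[0,\tilde v_i-\tilde v_{i+1})$ gives the stated form. For part (iii), on this active interval I would set the ratio equal to $t$, square (legitimate since the numerator $\sum_{j=1}^i w_j$ is strictly positive there), and expand using $S_1=\sum_{j=1}^i\tilde v_j$ and $S_2=\sum_{j=1}^i\tilde v_j^2$ to obtain
\[
i(i-t^2)\lambda^2+2S_1(t^2-i)\lambda+\big(S_1^2-t^2S_2\big)=0,
\]
a quadratic in $\lambda$ (equivalently in $\delta$) having $\lambda^\ast$ as the root lying in the interval.

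I expect the main obstacle to be the endpoint/sign bookkeeping in (ii): matching the hypothesis $t\in[\sqrt{n_{\max}},\sqrt n]$ to the actual limiting values of $\phi_1$, handling the constant plateau of $\phi_1$ near $v_{\max}$ (where monotonicity is not strict), and confirming that the left-endpoint value is genuinely nonnegative under the non-triviality assumption. A secondary care point is in (iii), where squaring introduces a spurious sign and one must verify that the admissible quadratic root is the one falling inside $(\tilde v_{i+1},\tilde v_i]$; the positivity of the numerator on the active interval is what selects the correct root.
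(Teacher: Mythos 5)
You are attempting to prove a statement that the paper does not prove --- and could not: Lemma~\ref{lem.Gloaguen} is quoted from \cite{Gloaguen17}, and the remark immediately following it \emph{refutes} part (ii) using the paper's Example~1. Concretely, take $n=2$, $t=1.2$, $\bv=(1,0)$. Then $n_{\max}=1$ and $t\in[\sqrt{n_{\max}},\sqrt{n}]=[1,\sqrt{2}]$, so the hypotheses of (ii) hold; yet for every $\lambda\in(0,1)$,
\[
\phi_1(\lambda)=\frac{\max(1-\lambda,0)+\max(-\lambda,0)}{\sqrt{(\max(1-\lambda,0))^2+(\max(-\lambda,0))^2}}-1.2=-0.2,
\]
so no point of the form $\tilde v_i-\delta$ is a root. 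The step in your argument that fails is exactly the left-endpoint evaluation in (ii): from $t\le\sqrt{n}$ you conclude that $\phi_1$ is nonnegative ``at the opposite end,'' but $t\le\sqrt{n}$ only bounds the \emph{maximum possible} value of $\|S_\lambda(\bv)\|_1/\|S_\lambda(\bv)\|_2$ over all inputs; for a specific $\bv$ the value at $\lambda=0$ is $\|\bv\|_1/\|\bv\|_2-t$, which can be strictly negative (it is $1-1.2$ in the example). Since by your own part (i) $\phi_1$ is decreasing, $\phi_1(0)<0$ kills the intermediate value argument outright. You half-sensed this when you invoked ``the standing assumption that the projection is nontrivial,'' but no such assumption appears in the lemma; the condition actually needed is $\|\bv\|_1\ge t\|\bv\|_2$, i.e.\ $\phi_1(0)\ge 0$, which is an additional hypothesis, not a consequence of $t\in[\sqrt{n_{\max}},\sqrt{n}]$.

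Your treatments of (i) and (iii) are essentially sound: continuity plus Cauchy--Schwarz on each active interval, then squaring to obtain the quadratic $i(i-t^2)\lambda^2+2S_1(t^2-i)\lambda+S_1^2-t^2S_2=0$, which parallels the $\varphi_j$ machinery the paper builds in Lemma~\ref{lem.QuadFam} for its own auxiliary function. Note, however, that monotonicity need not be strict and genuine plateaus occur (the paper's Example~2, $\bv=(1,1,0)$, $t=\sqrt{2}$, gives $\phi_1\equiv 0$ on $(0,1)$), and also that your spurious-root concern in (iii) is real and is resolved by the sign of the numerator, exactly as in the selection of the smaller root \eqref{eq:QuadRoot}. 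This fragility of $\phi_1$ is precisely why the paper abandons it in favor of $\phi(\lambda)=\|(\bv-\lambda\mathbf{1})^+\|_1^2-t^2\|(\bv-\lambda\mathbf{1})^+\|_2^2$, for which the correct existence criterion is Proposition~\ref{pro.psiroot}(iii): a root exists on $(0,v_{\max})$ if and only if $\phi(0)>0$. Under that corrected hypothesis your IVT strategy would go through; as stated, part (ii) cannot be proved because it is false.
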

%

\noindent{\bf Remark.} Part  (ii) of Lemma \ref{lem.Gloaguen} shown that $n_{\max}\le t^2$ is the sufficient condition that $\phi_1(\lambda)=0$ have a root on $(0, \tilde v_{\max})$. However, Example 1 is a counterexample indicating  that  $n_{\max}\le t^2$
 is not   sufficient to guarantee  $\phi_1(\lambda)$ has a root on $(0,   v_{\max})$.

\section{Proposed auxiliary function}\label{sec.function}

Based on the discussion in \S\ref{sec.existing}, most existing projection algorithms onto the intersection of  $\ell_1$ and $\ell_2$ balls/spheres
 are constructed by using the auxiliary function $\phi_1(\lambda)$.  Our proposed methods are based on different
 auxiliary functions for characterizing the properties of the projections, which is the main focus of this section.

We first  show that  the solutions of  \eqref{prob.b1b2.0}/\eqref{prob.b1s2.0}/\eqref{prob.s1s2.0},
have  the same sign as the given $\bv$, 
which is a generalized result of the $\ell_1$ ball
projection in \cite{Duchi08,Gong11}.

\begin{proposition}\label{prop.sign}
Let   $\bx$ be the first-order  optimal  to   \eqref{prob.b1b2.0}/\eqref{prob.b1s2.0}/\eqref{prob.s1s2.0}, then $v_ix_i \ge 0$ for $i=1,\ldots,n$.
\end{proposition}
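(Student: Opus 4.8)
The plan is to read off the sign relation from the first-order optimality (KKT) conditions, treating all three problems through a single stationarity identity. Writing the feasible set of any of \eqref{prob.b1b2.0}, \eqref{prob.b1s2.0}, \eqref{prob.s1s2.0} as $\Omega$, the objective $f(\bx)=\tfrac12\|\bx-\bv\|_2^2$ is smooth with $\nabla f(\bx)=\bx-\bv$, so first-order optimality of $\bx$ is the inclusion $\bv-\bx\in N_{\Omega}(\bx)$. Expressing the $\ell_2$ constraint through $\tfrac12(\|\bx\|_2^2-1)$ (gradient $\bx$) and the $\ell_1$ constraint through the subdifferential $\partial\|\cdot\|_1$ recorded in the Notation section, the optimality condition takes the unified form
\[ \bv=(1+\nu)\bx+\mu g,\qquad g\in\partial\|\bx\|_1, \]
where $\mu$ is the multiplier of the $\ell_1$ constraint, $\nu$ that of the $\ell_2$ constraint, and $g_i=\sign(x_i)$ for $x_i\ne0$ while $g_i\in[-1,1]$ for $x_i=0$. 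The whole proposition will be extracted from this one relation.

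First I would argue coordinatewise. Fix $i$. If $x_i=0$ then $v_ix_i=0$ and there is nothing to prove. If $x_i>0$ then $g_i=1$ and the $i$-th equation reads $v_i=(1+\nu)x_i+\mu$; if $x_i<0$ then $g_i=-1$ and $v_i=(1+\nu)x_i-\mu$. In either case
\[ v_ix_i=(1+\nu)x_i^2+\mu|x_i|, \]
so $v_ix_i\ge0$ as soon as the coefficient $1+\nu$ and the multiplier $\mu$ are nonnegative. Thus the statement reduces to controlling the signs of the two multipliers.

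Next I would dispose of the signs that are automatic. In \eqref{prob.b1b2.0} and \eqref{prob.b1s2.0} the $\ell_1$ constraint is the inequality $\|\bx\|_1\le t$, so complementarity gives $\mu\ge0$; in \eqref{prob.b1b2.0} the $\ell_2$ constraint is likewise the inequality $\|\bx\|_2\le1$, giving $\nu\ge0$ and hence $1+\nu\ge1>0$. The coordinatewise identity therefore settles \eqref{prob.b1b2.0} at once, and reduces \eqref{prob.b1s2.0} and \eqref{prob.s1s2.0} to handling the equality constraints, namely the coefficient $1+\nu$ from the $\ell_2$-sphere and, in \eqref{prob.s1s2.0}, the multiplier $\mu$ from the $\ell_1$-sphere.

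The main obstacle is exactly this sphere case, since an equality constraint leaves its multiplier a priori unsigned. To close it I would exploit that $\bx$ is optimal and not merely feasible, through a single-coordinate exchange: flipping the sign of $x_i$ preserves both $\|\bx\|_1$ and $\|\bx\|_2$, hence keeps the point in $\Omega$, while it changes the objective by $2v_ix_i$. Optimality of $\bx$ then forbids $v_ix_i<0$ for every $i$, which is the assertion; fed back into the stationarity identity it simultaneously forces $1+\nu\ge0$ (equivalently $\bv^\top\bx\ge0$, so $\bx$ lies on the hemisphere facing $\bv$, the side on which a projection must land) and confirms the consistency of $\mu$ for the $\ell_1$-sphere. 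I expect the delicate point to be carrying out this exchange uniformly across the sphere constraints and verifying that the feasibility of the flipped point is genuinely preserved for all three geometries; once that is in place, the coordinatewise equation yields $v_ix_i\ge0$ for $i=1,\dots,n$ in every case.
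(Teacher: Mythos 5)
Your proposal is correct, but note that its decisive step is exactly the paper's entire proof: the paper proves Proposition~\ref{prop.sign} in one stroke by the single-coordinate exchange, flipping $x_{i_0}\mapsto -x_{i_0}$ when $v_{i_0}x_{i_0}<0$, observing that this preserves both $\|\bx\|_1$ and $\|\bx\|_2$ (hence feasibility for all three geometries at once, with no case distinction needed), and that the objective strictly decreases, contradicting optimality. Everything you build before that --- the stationarity identity $\bv=(1+\nu)\bx+\mu g$ with $g\in\partial\|\bx\|_1$, the coordinatewise relation $v_ix_i=(1+\nu)x_i^2+\mu|x_i|$, and the sign bookkeeping for the multipliers --- is sound for \eqref{prob.b1b2.0}, where complementarity gives $\mu\ge0$, $\nu\ge0$, but it is scaffolding you end up not needing, because the exchange argument you invoke for the sphere cases already covers the ball case verbatim. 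Your instinct that the KKT route alone cannot close \eqref{prob.b1s2.0} and \eqref{prob.s1s2.0} is well founded and worth making explicit: for the sphere constraints the multipliers are genuinely unsigned, and there do exist stationary points violating the conclusion (e.g.\ for \eqref{prob.s1s2.0} with $\bv=(1,0)$, a point $\bx=(-a,-b)$ with $a+b=t$, $a^2+b^2=1$, $a\neq b$ admits multipliers $1+\nu=-1/(a-b)$, $\mu=b/(a-b)$, yet $v_1x_1<0$), so any proof resting purely on first-order conditions must fail there. Consequently your proof, like the paper's, really establishes the claim for (local or global) minimizers rather than arbitrary first-order points --- the flipped point lies at distance $2|x_i|$ from $\bx$, so the exchange contradicts optimality, not mere stationarity; this looseness is inherited from the paper's own wording of the statement and is not an additional gap on your part. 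The net comparison: the paper's route is shorter and uniform across the three problems; yours buys, in the ball case, a genuinely first-order-only conclusion plus the explicit multiplier identity later reused in the duality analysis, at the cost of a detour that the sphere cases force you to abandon anyway.
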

\begin{proof}Assume  by contradiction that there exists $i_0$ such that $v_{i_0}x_{i_0} < 0$.  Define $\hat\bx$ such that
$\hat x_{i_0} = - x_{i_0}$ and $\hat x_i = x_i$ for all $i\neq i_0$, implying $\|\hat\bx\|_p = \|\bx\|_p, p=1,2$.
Therefore,  $\hat\bx$ is feasible for \eqref{prob.b1b2.0}/\eqref{prob.b1s2.0}/\eqref{prob.s1s2.0}.  However,
\[ \tfrac{1}{2}(\|\bx-\bv\|_2^2 - \|\hat\bx-\bv\|_2^2) =\tfrac{1}{2}((x_{i_0} - v_{i_0})^2 - (-x_{i_0} - v_{i_0})^2) = - 2v_{i_0}x_{i_0} > 0,\]
contradicting that $\bx$ is optimal for \eqref{prob.b1b2.0}/\eqref{prob.b1s2.0}/\eqref{prob.s1s2.0}.  This completes the proof.
\end{proof}

Using the  symmetry of the feasible region stated in Proposition~\ref{prop.sign}, we can transform  the original problems
 \eqref{prob.b1b2.0}, \eqref{prob.b1s2.0} and \eqref{prob.s1s2.0}  to
their  corresponding problems   restricted in $\mathbb{R}^n_+$, so that from now on
we can focus on  the following problems
\begin{equation}\label{prob.b1b2}
\mathop{\rm minimize}\limits_{\bx}   \tfrac{1}{2}\|\bx-\bv\|_2^2\quad \st \ \bx\in  \mathbb{B}_1^t \cap \mathbb{B}_2 \cap \R^n_+,
\end{equation}
\begin{equation}\label{prob.b1s2}
\mathop{\rm minimize}\limits_{\bx}   \tfrac{1}{2}\|\bx-\bv\|_2^2\quad \st \  \bx\in   \mathbb{B}_1^t\cap\mathbb{S}_2 \cap \R^n_+,
\end{equation}
\begin{equation}\label{prob.s1s2}
\mathop{\rm minimize}\limits_{\bx}   \tfrac{1}{2}\|\bx-\bv\|_2^2\quad \st \  \bx\in   \mathbb{S}_1^t\cap\mathbb{S}_2\cap \R^n_+,
\end{equation}
corresponding to \eqref{prob.b1b2.0}, \eqref{prob.b1s2.0} and \eqref{prob.s1s2.0},  
 respectively.



%
%

We define the following   univariate function
for given $\bv\in\mathbb{R}^n$ and  $t>0$:
\[
\begin{aligned}
   \phi(\lambda) : = &\ \| (\bv - \lambda \boldsymbol{1})^+\|_1^2 - t^2 \| (\bv - \lambda \boldsymbol{1})^+\|_2^2.
\end{aligned}
\]
Denote the index set of components greater than or equal to a given $\lambda$:
$$\Ical_\lambda = \{i : v_i \ge \lambda, i=1,\ldots,n \}\ \text{ and } \   I_\lambda =|\Ical_\lambda|.$$
The summations of those components and the squared components are denoted  as
$s_\lambda  = \sum\limits_{i\in \Ical_\lambda} v_i$ and  $w_\lambda  = \sum\limits_{i\in \Ical_\lambda} v_i^2,$
respectively.
For simplicity,  for the $k$ distinct values in $\bv$, we write
\[ \Ical_j = \Ical_{\lambda_j},\  I_j = I_{\lambda_j}, \ s_j= s_{\lambda_j}, \ w_j = w_{\lambda_j},\ \text{ for }  j=1, \ldots k.\]
Notice that since $\lambda_j > \lambda_{j+1}$, 
\[ \Ical_j \subset \Ical_{j+1},\ I_j < I_{j+1},\ s_j < s_{j+1},\ w_j < w_{j+1}, \ \text{ for } j=1,\ldots, k-1.\]
In particular, it is obvious that
\begin{equation}\label{eq:index}
\begin{cases}
\Ical_\lambda = \Ical_j, I_\lambda = I_j, s_\lambda = s_j, w_\lambda = w_j, & \    \forall  \lambda\in (\lambda_{j+1}, \lambda_{j}],\  j=1,...,k,\\
I_k=n, s_k=\sum\limits_{i=1}^nv_i, w_k=\sum\limits_{i=1}^nv_i^2. &
\end{cases}
\end{equation}
Therefore, we can rewrite 
$\phi(\lambda)$   as
\begin{align}
  \phi(\lambda) & =\ (I_\lambda  -t^2)(I_\lambda \lambda- 2 s_\lambda)\lambda + s^2_\lambda-t^2 w_\lambda.\label{eq:psi}
\end{align}
For $I_j$, $s_j$, $w_j$, $j=1,\ldots, k$,    define
\begin{equation} \label{eq:quadF}
\varphi_j(\lambda):=(I_j-t^2)\left(I_j\lambda - 2s_j\right)\lambda + s_j^2 - t^2w_j, \  j=1, \ldots,k.
\end{equation}
 For brevity,   let
 $j_t=\min\{j: I_j\ge t^2, j=1,\ldots, k\}$
 which must exist by the fact that
$I_k =|\Ical_{\lambda_k}| = n$  and $n\ge t^2$.

The properties of $\varphi_j$ dependent on $j_t$ are analyzed    below.
 \begin{lemma}\label{lem.QuadFam}
\begin{enumerate}
\item[(i)]  For $j<j_t$, $\varphi_j$ is concave on $\R$ and strictly increasing on $(-\infty, \lambda_{j}]$.
\item[(ii)]
If $j\ge j_t$ and $I_{j_t} > t^2$, $\varphi_j$ is convex and strictly decreasing on $(-\infty, \lambda_j]$.
If $j > j_t$ and $I_{j_t} = t^2$, $\varphi_j$ is convex on $\R$ and strictly decreasing on $(-\infty, \lambda_j]$ and
\begin{equation}\label{eq:psicons}
\varphi_{j_t}(\lambda) \equiv s_{j_t}^2 - t^2w_{j_t}\le 0, \ \forall \lambda\in \R,
\end{equation}
where the equality holds  only if $I_1=t^2$.
\item[(iii)] For  $j> j_t, \varphi_j(\lambda_j)=\varphi_{j-1}(\lambda_j)$ and $\varphi_j(\lambda)>\varphi_{j-1}(\lambda)$, for any $\lambda<\lambda_j$.
\item[(iv)] For $j\ge j_t$, the smaller root for $\varphi_j(\lambda)=0$ is
\begin{equation}\label{eq:QuadRoot}
\lambda^\varphi_j = \frac{1}{I_j}\left( s_j  - t\sqrt{\frac{I_j w_j-s_j^2}{I_j-t^2}} \right).
\end{equation}
There is no root for $\varphi_j(\lambda)=0$ if $2\le j< j_t$.
\end{enumerate}
\end{lemma}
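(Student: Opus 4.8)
The plan is to treat each $\varphi_j$ as an ordinary quadratic in $\lambda$ and read off every assertion from its leading coefficient and its vertex. Expanding \eqref{eq:quadF} gives $\varphi_j(\lambda)=(I_j-t^2)I_j\lambda^2-2(I_j-t^2)s_j\lambda+(s_j^2-t^2w_j)$, so the leading coefficient is $(I_j-t^2)I_j$; since $I_j\ge 1>0$, its sign is decided purely by whether $I_j<t^2$ or $I_j>t^2$, that is by the position of $j$ relative to $j_t$. This yields concavity for $j<j_t$ and convexity for $j>j_t$ at once. The vertex sits at $\lambda=s_j/I_j$, and the elementary identity $s_j-I_j\lambda_j=\sum_{i\in\Ical_j}(v_i-\lambda_j)\ge 0$ (every active $v_i$ is at least $\lambda_j$) shows $\lambda_j\le s_j/I_j$, so the whole interval $(-\infty,\lambda_j]$ lies weakly to the left of the vertex. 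The last recurring tool is the Cauchy--Schwarz inequality in the form $s_j^2=\big(\sum_{i\in\Ical_j}v_i\big)^2\le I_jw_j$, with equality exactly when all active components coincide.

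For (i) and the generic part of (ii) I would combine these facts through the factorization $\varphi_j(\lambda')-\varphi_j(\lambda)=(I_j-t^2)I_j(\lambda'-\lambda)\big(\lambda'+\lambda-2s_j/I_j\big)$. For $\lambda<\lambda'\le\lambda_j\le s_j/I_j$ the first factor is positive and the second negative, so their product is negative; multiplying by $(I_j-t^2)I_j$ then makes $\varphi_j$ strictly increasing when $j<j_t$ and strictly decreasing when $I_j>t^2$. The only genuinely separate case is the degenerate one $I_{j_t}=t^2$: there both the quadratic and the linear coefficients of $\varphi_{j_t}$ vanish, so $\varphi_{j_t}\equiv s_{j_t}^2-t^2w_{j_t}$, and Cauchy--Schwarz gives $s_{j_t}^2\le I_{j_t}w_{j_t}=t^2w_{j_t}$, hence \eqref{eq:psicons}; equality would force all components of $\Ical_{j_t}$ equal, which can only happen when $\Ical_{j_t}=\Ical_1$, i.e. $I_1=t^2$.

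Part (iii) is where I expect the real work. Here I would use the equivalent form $\varphi_j(\lambda)=\big(\sum_{i\in\Ical_j}(v_i-\lambda)\big)^2-t^2\sum_{i\in\Ical_j}(v_i-\lambda)^2$ and subtract, noting that $\Ical_j$ differs from $\Ical_{j-1}$ only by the block $D=\{i:v_i=\lambda_j\}$ of size $m_j=I_j-I_{j-1}$, on which $v_i-\lambda=\lambda_j-\lambda$. Writing $u=\lambda_j-\lambda$ and $C=\sum_{i\in\Ical_{j-1}}(v_i-\lambda_j)>0$, a short collection of terms gives the clean factorization
\[
\varphi_j(\lambda)-\varphi_{j-1}(\lambda)=m_j\,u\big[\,2C+(2I_{j-1}+m_j-t^2)u\,\big].
\]
At $u=0$, i.e. $\lambda=\lambda_j$, this vanishes, giving $\varphi_j(\lambda_j)=\varphi_{j-1}(\lambda_j)$; for $\lambda<\lambda_j$ we have $u>0$, while $j>j_t$ forces $I_{j-1}\ge t^2$ and hence $2I_{j-1}+m_j-t^2>0$, so the bracket is strictly positive and $\varphi_j(\lambda)>\varphi_{j-1}(\lambda)$.

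Finally, for (iv) I would apply the quadratic formula to $\varphi_j$. A short computation yields $-b/(2a)=s_j/I_j$ and discriminant $b^2-4ac=4t^2(I_j-t^2)(I_jw_j-s_j^2)$. For $j\ge j_t$ with $I_j>t^2$ both $I_j-t^2>0$ and $I_jw_j-s_j^2\ge 0$, so the roots are real and the smaller one (the minus sign) simplifies to \eqref{eq:QuadRoot}. For $2\le j<j_t$ we instead have $I_j-t^2<0$, while $\Ical_j$ now contains at least two distinct values, so Cauchy--Schwarz is strict and $I_jw_j-s_j^2>0$; the product makes the discriminant strictly negative, so $\varphi_j$ has no real root. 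The index $j=1$ is excluded precisely because $\Ical_1$ consists of equal components, which makes the discriminant vanish and produces a double root at $v_{\max}$. The main obstacle throughout is bookkeeping: getting the factorization in (iii) exactly right and tracking the strict-versus-nonstrict Cauchy--Schwarz distinction that singles out $j=1$ in (iv).
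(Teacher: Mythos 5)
Your proposal is correct, and for parts (i), (ii) and (iv) it is essentially the paper's argument in different clothing: the paper reads off $\varphi_j'(\lambda) = 2(I_j-t^2)(I_j\lambda - s_j)$ and $\varphi_j''(\lambda)=2I_j(I_j-t^2)$ and argues via derivative signs, while you use the finite-difference factorization $\varphi_j(\lambda')-\varphi_j(\lambda)=(I_j-t^2)I_j(\lambda'-\lambda)\bigl(\lambda'+\lambda-2s_j/I_j\bigr)$ together with the vertex bound $\lambda_j\le s_j/I_j$ — the same quadratic analysis, equivalent content. The degenerate case \eqref{eq:psicons} and the discriminant computation $\Delta=4t^2(I_j-t^2)(I_jw_j-s_j^2)$ in (iv), including the strict-versus-nonstrict Cauchy--Schwarz distinction that singles out $j=1$, match the paper line for line. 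The one genuine departure is part (iii): the paper shows $\varphi_j(\lambda_j)=\varphi_{j-1}(\lambda_j)$ by substituting the recurrences $I_j=I_{j-1}+(I_j-I_{j-1})$, $s_j=s_{j-1}+(I_j-I_{j-1})\lambda_j$, $w_j=w_{j-1}+(I_j-I_{j-1})\lambda_j^2$, and then compares derivatives, proving $\varphi_j'(\lambda)-\varphi_{j-1}'(\lambda)<0$ for $\lambda<\lambda_j$ and integrating back; you instead exploit the representation $\varphi_j(\lambda)=\bigl(\sum_{i\in\Ical_j}(v_i-\lambda)\bigr)^2-t^2\sum_{i\in\Ical_j}(v_i-\lambda)^2$ to obtain the closed-form identity $\varphi_j(\lambda)-\varphi_{j-1}(\lambda)=m_j u\bigl[2C+(2I_{j-1}+m_j-t^2)u\bigr]$ with $u=\lambda_j-\lambda$, which I verified is algebraically correct. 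Your route buys something concrete: both conclusions of (iii) drop out of a single identity with no calculus, and since $2I_{j-1}+m_j-t^2=I_{j-1}+I_j-t^2\ge I_{j-1}>0$ already when $I_j\ge t^2$, your bracket is positive for all $j\ge j_t$, not just $j>j_t$, so the factorization silently recovers the slightly stronger range the paper's own derivative computation also covers. The paper's route, by contrast, generalizes more readily when the difference of two pieces is not explicitly factorable. One small point of care: in (iv) you correctly restrict the root formula \eqref{eq:QuadRoot} to $I_j>t^2$, which is slightly more careful than the paper's statement, since the formula is undefined when $I_j=t^2$ (the constant case already handled in (ii)).
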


\begin{proof}
(i) It follows from \eqref{eq:quadF} that the first and second derivative of $\varphi_j$ is 
\begin{equation}\label{eq:Dpsi}
 \varphi_j'(\lambda) = 2(I_j-t^2)\left(I_j\lambda -  s_j \right) \  \text{ and}
 \quad \varphi_j''(\lambda)  = 2I_j(I_j-t^2).\end{equation}
Note that $ I_j\lambda - s_j < I_j \lambda_j- s_j \le 0$ for any $\lambda<\lambda_j$.
Therefore, both the sign of $\varphi_j'$ and $\varphi_j''$ are determined by the sign of $I_j-t^2$.
For  $j< j_t$, $\varphi''_j<0$ on $\R$ and $\varphi'_j(\lambda)>0$ on $(-\infty, \lambda_{j})$ since  $I_j<t^2$
by the definition of $j_t$.

(ii)
For $j\ge j_t$ and $I_j \ge I_{j_t} > t^2$, we have $\varphi''_j>0$ on $\R$ and $\varphi'_j(\lambda)<0$ on $(-\infty, \lambda_{j})$. 
 For $j> j_t$ and $I_j > I_{j_t} = t^2$, we have  $\varphi''_j>0$ on $\R$ and $\varphi'_j(\lambda)<0$ on $(-\infty, \lambda_{j})$;
 in particular, $\varphi'_{j_t}(\lambda) =\varphi''_{j_t}(\lambda)  = 0$ and
  $\varphi_{j_t}$ takes  constant \eqref{eq:psicons} on $\R$ by the definition  \eqref{eq:quadF}.

(iii)
It   holds naturally  that
\[
 I_j =  \  I_{j-1}+(I_j-I_{j-1}), \
 s_j=  \ s_{j-1}+(I_j-I_{j-1})\lambda_j,\
  w_j=  \ w_{j-1}+(I_j-I_{j-1})\lambda_j^2.
  \]
Plugging this
 into $\varphi_j(\lambda_j)$ yields that $\varphi_j(\lambda_j)=\varphi_{j-1}(\lambda_j)$.
 Moreover, it can be easily verified that
 for $j=2, \ldots, k$,
\[\varphi'_j(\lambda)-\varphi'_{j-1}(\lambda)=2(I_j-I_{j-1})\big[(I_j-t^2)(\lambda - \lambda_j)+ I_{j-1}\lambda-s_{j-1}\big].\]
If $j\ge j_t$, then $I_j-t^2\ge 0$, meaning $(I_j-t^2)(\lambda - \lambda_j)\le 0$ for $\lambda<\lambda_j$. In addition,
\[ I_{j-1}\lambda-s_{j-1} = \lambda  |\Ical_{j-1}| - \sum_{i\in\Ical_{j-1}} v_i < \lambda_j |\Ical_{j-1}| - \sum_{i\in\Ical_{j-1}} v_i  < 0\]
 for any $\lambda<\lambda_j$.
 Therefore, for $j\ge j_t$, it holds that
 $ \varphi'_j(\lambda)-\varphi'_{j-1}(\lambda)<0,\quad \text{for any } \lambda<\lambda_j.$
It then follows that $\varphi_j(\lambda) > \varphi_{j-1}(\lambda)$ for any $\lambda < \lambda_j$, completing
 the proof of (iii).

(iv)
The discriminant of $\varphi_j(\lambda) =0$ is
$\Delta = 4t^2(I_j-t^2)(I_j w_j - s_j^2).$
Now we discuss the sign of $\Delta$.
By the Cauchy-Schwarz inequality
\[ I_j w_j - s_j^2 = |\Ical_j| \sum_{i\in\Ical_j} v_i^2 -  \big(\sum_{i\in\Ical_j} v_i\big)^2 \ge 0,\]
where the inequality  holds strictly for  $j\ge 2$ since there are at least two distinct
values in the summation.  Therefore, if $j\ge j_t \ge 1$,
 then $\Delta\ge 0$ and the smaller root of $\varphi_j(\lambda)=0$ is given by \eqref{eq:QuadRoot}.
In particular, if $j=1$, then $\Delta =0$ and $\varphi_1(\lambda)$ has a unique root $\lambda_1$.
Moreover, if $j_t > j\ge 2$, then $\Delta<0$ since $I_j-t^2 < 0$ and $I_jw_j-s_j^2> 0$,
implying  $\varphi_j(\lambda)=0$ has no root.
This completes the proof of  (iv).
\end{proof}

\begin{proposition}\label{lem.psi}
The following statements hold true.
\begin{enumerate}
\item[(i)] $\phi$ is continuous on $\R$.
\item[(ii)] Suppose  $I_1>t^2$, $\phi$ is decreasing, piecewisely convex and quadratic on $(-\infty, v_{\max})$.
\item[(iii)] Suppose $I_1=t^2$, $\phi$ is decreasing, piecewisely convex and quadratic on $(-\infty, \lambda_2]$ and $\phi\equiv 0$ on $(\lambda_2, v_{\max})$.
\item[(iv)] Suppose $I_1< t^2$.  $\phi$ is increasing and piecewisely concave and quadratic on $[\lambda_{j_t}, v_{\max})$.
 Furthermore, if $I_{j_t}>t^2$, then $\phi(\lambda)$ is decreasing and piecewisely convex and quadratic  on $(-\infty, \lambda_{j_t})$; if $I_{j_t}=t^2$, then $\phi(\lambda)$ is decreasing and piecewise quadratic convex on $(-\infty, \lambda_{j_t+1})$, and   on $(\lambda_{j_t+1}, \lambda_{j_t}]$ 
 \[\varphi_{j_t}(\lambda) \equiv s_{j_t}^2 - t^2w_{j_t}.\]
\item[(v)] For any $\lambda\le \lambda_{j_t}$,
\begin{equation}\label{eq:MaxQuadFam}
\phi(\lambda)=\max\{q_{j_t}(\lambda), \ldots, q_k(\lambda)\},
\end{equation}
and $\phi(\lambda)$ is convex on $(-\infty, \lambda_{j_t}]$. Furthermore, for $j\ge {j_t}$ $\phi'(\lambda)=\varphi'_j(\lambda)$ for $\lambda\in (\lambda_{j+1},\lambda_j)$ and $\varphi'_j(\lambda_j)\in\partial \phi(\lambda_j)$.
\end{enumerate}
\end{proposition}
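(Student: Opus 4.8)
The plan is to reduce every claim to the single identity that on each interval $(\lambda_{j+1},\lambda_j]$ the function $\phi$ coincides with the one quadratic $\varphi_j$; this is exactly \eqref{eq:index} fed into \eqref{eq:psi}--\eqref{eq:quadF}, and it lets me transfer the piecewise information in Lemma~\ref{lem.QuadFam} to $\phi$. For (i) I would bypass the pieces entirely: the map $\lambda\mapsto(\bv-\lambda\boldsymbol{1})^+$ is continuous and the two norms are continuous, so $\phi$ is continuous on $\R$ as a composition, and $\phi\equiv 0$ for $\lambda\ge v_{\max}$ since then $\Ical_\lambda=\emptyset$. This global continuity is the glue that later upgrades per-piece monotonicity into monotonicity on a whole interval.

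For (ii)--(iv) I would first read off $j_t$ from the hypothesis ($j_t=1$ exactly when $I_1\ge t^2$, and $j_t\ge 2$ when $I_1<t^2$) and then argue interval by interval. On $(\lambda_{j+1},\lambda_j]$ we have $\phi=\varphi_j$, so Lemma~\ref{lem.QuadFam}(i) delivers concavity and strict increase when $j<j_t$, while Lemma~\ref{lem.QuadFam}(ii) delivers convexity and strict decrease when $j\ge j_t$. In case (iii) the extra input is that $\Ical_1$ consists of the maximal components, so $s_1=I_1\lambda_1$ and $w_1=I_1\lambda_1^2$, whence $s_1^2-t^2w_1=I_1\lambda_1^2(I_1-t^2)=0$ and $\varphi_1\equiv 0$; the degenerate flat piece in case (iv) with $I_{j_t}=t^2$ is handled identically through \eqref{eq:psicons}. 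Global monotonicity on each stated interval then follows by chaining the one-piece statements across the breakpoints using the continuity from (i); the only bookkeeping nuance is the endpoint $\lambda_{j_t}$, which I would attach to the increasing branch by continuity.

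For the convexity half of (v) I would argue directly from the kink structure. Each $\varphi_j$ with $j\ge j_t$ is convex by Lemma~\ref{lem.QuadFam}(ii); at an interior breakpoint $\lambda_j$ the left derivative of $\phi$ is $\varphi_j'(\lambda_j)$ and the right derivative is $\varphi_{j-1}'(\lambda_j)$, and the computation already performed inside the proof of Lemma~\ref{lem.QuadFam}(iii) gives $\varphi_j'(\lambda_j)-\varphi_{j-1}'(\lambda_j)=2(I_j-I_{j-1})(I_{j-1}\lambda_j-s_{j-1})<0$, so the slope jumps \emph{upward}. A continuous function that is convex on each piece and whose one-sided slopes only increase at the kinks is convex on $(-\infty,\lambda_{j_t}]$. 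The two derivative assertions then fall out at once: $\phi=\varphi_j$ on the open piece forces $\phi'=\varphi_j'$ there, and at the kink the subdifferential of the convex $\phi$ is the interval $[\varphi_j'(\lambda_j),\varphi_{j-1}'(\lambda_j)]$, which contains $\varphi_j'(\lambda_j)$.

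The hard part will be the envelope identity \eqref{eq:MaxQuadFam}. Given the convexity just established, I would try to realize $\phi$ on $(-\infty,\lambda_{j_t}]$ as the pointwise maximum of the quadratics $q_j$ by showing, piece by piece, that the quadratic active on $(\lambda_{j+1},\lambda_j]$ dominates all the others there, the natural tool being Lemma~\ref{lem.QuadFam}(iii) iterated to reach non-adjacent indices. The genuine obstacle is that a convex quadratic arc of a convex function need \emph{not} remain below the function once extended past its own interval, so the comparison is subtle precisely away from the breakpoints: each pairwise difference is a quadratic vanishing at the shared breakpoint, and one must verify that its \emph{second} zero is pushed outside $(-\infty,\lambda_{j_t}]$ so that the sign cannot flip back on the relevant range --- this is exactly where the restriction $\lambda\le\lambda_{j_t}$ is essential. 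I would isolate this as a short lemma locating that second root via the symmetry of each difference parabola about its vertex, and then assemble the global ordering by telescoping the adjacent comparisons of Lemma~\ref{lem.QuadFam}(iii), with equalities confined to the breakpoints. This root-location step, not the surrounding bookkeeping, is where I expect the real work to lie.
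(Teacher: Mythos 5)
Your parts (i)--(iv) are correct and essentially the paper's own argument: the per-piece identity $\phi=\varphi_j$ on $(\lambda_{j+1},\lambda_j]$, the transfer of Lemma~\ref{lem.QuadFam}(i)--(ii) to each piece, and continuity to chain monotonicity across breakpoints. For the convexity half of (v) you genuinely diverge from the paper: the paper deduces convexity \emph{from} the envelope identity \eqref{eq:MaxQuadFam} (its whole proof is that Lemma~\ref{lem.QuadFam}(iii) yields \eqref{eq:MaxQuadFam}, hence convexity and the subgradient claim), whereas you obtain convexity directly from per-piece convexity plus the upward slope jumps $\varphi_j'(\lambda_j)-\varphi_{j-1}'(\lambda_j)=2(I_j-I_{j-1})(I_{j-1}\lambda_j-s_{j-1})<0$ at the kinks. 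Your route is valid, and it matters that it is independent of the envelope, for the following reason.

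The genuine gap is the envelope identity itself: the root-location lemma your plan hinges on is false, and so is \eqref{eq:MaxQuadFam} as stated (reading $q_j$ as $\varphi_j$). Take $n=12$, $\bv=(1,\,0.9,\,0,\dots,0)$ with ten zeros, and $t^2=3/2$. Then $\lambda_1=1$, $\lambda_2=0.9$, $\lambda_3=0$, $I_1=1<t^2\le I_2=2$, so $j_t=2$ and $\lambda_{j_t}=0.9$. Here $\varphi_2(\lambda)=\lambda^2-1.9\lambda+0.895$ and $\varphi_3(\lambda)=126\lambda^2-39.9\lambda+0.895$, so the difference parabola is $\varphi_3(\lambda)-\varphi_2(\lambda)=\lambda(125\lambda-38)$: its second zero $38/125=0.304$ lies strictly \emph{inside} $(-\infty,\lambda_{j_t}]$, exactly the configuration your proposed lemma must exclude. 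Concretely, at $\lambda=1/2\le\lambda_{j_t}$ one has $\phi(1/2)=\varphi_2(1/2)=0.195$ while $\varphi_3(1/2)=12.445$, so $\phi(1/2)\neq\max\{\varphi_{j_t}(1/2),\dots,\varphi_k(1/2)\}$. The obstacle you flagged --- that a convex quadratic arc of a convex function need not stay below the function beyond its own interval --- is precisely what occurs, and no root-location argument can repair it. Note that the paper shares this blind spot: its one-line derivation of \eqref{eq:MaxQuadFam} from Lemma~\ref{lem.QuadFam}(iii) only controls the region $\lambda<\lambda_p$, i.e., it shows the active piece dominates pieces of \emph{smaller} index, and says nothing about larger-index pieces evaluated to the right of their breakpoints, which is where the identity fails.

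What survives, and what you should prove instead: your telescoping of Lemma~\ref{lem.QuadFam}(iii) does establish the $\lambda$-dependent envelope $\phi(\lambda)=\max\{\varphi_m(\lambda):\ j_t\le m,\ \lambda\le\lambda_m\}$, and your slope-jump argument independently yields convexity of $\phi$ on $(-\infty,\lambda_{j_t}]$, $\phi'=\varphi_j'$ on the open pieces, and $\varphi_j'(\lambda_j)\in\partial\phi(\lambda_j)$ --- which is all that the tangent-line lower bound for $\lambda_T$ in \S\ref{sec.algorithm} actually requires. So state (v) in that weakened form and drop the global max. Be aware that Proposition~\ref{prop.QuadApprax} cites (v) in the failing direction (it asserts $\phi(\lambda_Q)>\varphi_j(\lambda_Q)$ at a point $\lambda_Q$ to the right of piece $j$, while the counterexample above shows $\phi<\varphi_j$ can occur there), so its proof needs a separate justification as well.
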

\begin{proof} Part (i) is trivial. 

For part (ii), 
it can easily verified that
\begin{equation}\label{eq:psiPro3}
 \phi(\lambda) = \varphi_j(\lambda), \forall \lambda \in (\lambda_{j+1} ,  \lambda_{j}],\  j=1,..., k
\end{equation}
since \eqref{eq:index} and \eqref{eq:psi}. Moreover,  $\phi(\lambda)\equiv 0$ for $\lambda \in ( \lambda_1,+ \infty)$.

Part (iii) follows naturally  from Part (ii) and Lemma \ref{lem.QuadFam}(ii).

For part (iv), Lemma \ref{lem.QuadFam}(ii) shows 
 $\varphi_j$ is convex for $j\ge j_t$.  Since Lemma \ref{lem.QuadFam}(iii),  $\phi$ takes form \eqref{eq:MaxQuadFam}.
Therefore,  $\phi$ is convex on $(-\infty, \lambda_{j_t}]$ with
$\varphi'_j(\lambda_j) \in \partial \phi(\lambda_j)$.
\end{proof}

Using Proposition~\ref{lem.psi}, we can summarize the behavior of $\phi$ as follows.
\begin{proposition}\label{pro.psiroot}
For  $\phi$, the following statements hold true:
\begin{enumerate}
\item[(i)]  If $I_1>t^2$, then $\phi(\lambda)>0$ for any $\lambda\in (-\infty, v_{\max})$.
\item[(ii)] If $I_1=t^2$, then $\phi(\lambda)>0$ for any $\lambda\in (-\infty, \lambda_2)$ and $\phi(\lambda)\equiv 0$ for any
$\lambda\in [\lambda_2, v_{\max})$.
\item[(iii)] If $I_1<t^2$,  $\phi(\lambda)=0$ possesses  a unique root  on $(-\infty,v_{\max})$ and this root  lies in $(-\infty,\lambda_{j_t})$. Furthermore, $\phi(\lambda)=0$ possesses  a unique root  on $(0,v_{\max})$ if and only if $\phi(0)>0$.
\end{enumerate}
\end{proposition}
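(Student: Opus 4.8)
The plan is to anchor the analysis at the right endpoint by first noting that $\phi(v_{\max})=0$: at $\lambda=v_{\max}=\lambda_1$ every entry of $(\bv-\lambda\boldsymbol{1})^+$ vanishes, so both norm terms are zero (equivalently, substituting $s_1=I_1\lambda_1$ and $w_1=I_1\lambda_1^2$ into \eqref{eq:quadF} gives $\varphi_1(\lambda_1)=0$). With continuity in hand (Proposition \ref{lem.psi}(i)), each part then amounts to reading off the sign of $\phi$ on $(-\infty,v_{\max})$ from the monotonicity already recorded in Proposition \ref{lem.psi} and Lemma \ref{lem.QuadFam}.

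For (i), $I_1>t^2$ forces $j_t=1$, so by Proposition \ref{lem.psi}(ii) and Lemma \ref{lem.QuadFam}(ii) every piece $\varphi_j$ is strictly decreasing and hence $\phi$ is strictly decreasing on $(-\infty,v_{\max})$; together with $\phi(v_{\max})=0$ this gives $\phi(\lambda)>0$ for all $\lambda<v_{\max}$. For (ii), $I_1=t^2$ again gives $j_t=1$, and Lemma \ref{lem.QuadFam}(ii) shows $\varphi_1\equiv s_1^2-t^2w_1=0$, so $\phi\equiv0$ on $(\lambda_2,v_{\max}]$; on $(-\infty,\lambda_2]$ the pieces $\varphi_j$ with $j\ge2$ are strictly decreasing and $\phi(\lambda_2)=0$, whence $\phi>0$ on $(-\infty,\lambda_2)$.

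Part (iii) is the substantive case, where $I_1<t^2$ forces $j_t\ge2$, and I would split $(-\infty,v_{\max})$ at $\lambda_{j_t}$. On the upper block $[\lambda_{j_t},v_{\max})$, $\phi$ is the concatenation of the strictly increasing concave pieces $\varphi_j$ with $j<j_t$ (Lemma \ref{lem.QuadFam}(i)), so it is strictly increasing with right limit $\phi(v_{\max})=0$; hence $\phi<0$ throughout and there is no root here. On the lower block, Proposition \ref{lem.psi}(v) makes $\phi$ convex, and as $\lambda\to-\infty$ the leftmost piece $\varphi_k$ has leading coefficient $(I_k-t^2)I_k=(n-t^2)n>0$ (using the non-trivial range $t^2<n$), so $\phi\to+\infty$. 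If $I_{j_t}>t^2$, then $\phi$ is strictly decreasing on $(-\infty,\lambda_{j_t}]$ from $+\infty$ to $\phi(\lambda_{j_t})<0$, giving one zero $\lambda^*<\lambda_{j_t}$; if $I_{j_t}=t^2$ (so $j_t<k$), then $\varphi_{j_t}\equiv s_{j_t}^2-t^2w_{j_t}<0$ by Lemma \ref{lem.QuadFam}(ii) (strict since $I_1<t^2$), so $\phi$ is constant negative on $(\lambda_{j_t+1},\lambda_{j_t}]$ and strictly decreasing from $+\infty$ to that negative value on $(-\infty,\lambda_{j_t+1}]$, again giving a unique zero $\lambda^*<\lambda_{j_t+1}$. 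Either way $\phi=0$ has a unique root on $(-\infty,v_{\max})$, lying in $(-\infty,\lambda_{j_t})$.

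For the last sentence of (iii), I would record the global sign pattern produced above: $\phi>0$ exactly on $(-\infty,\lambda^*)$ and $\phi<0$ on $(\lambda^*,v_{\max})$, the flat negative piece of the $I_{j_t}=t^2$ sub-case being absorbed into the second set. Since $\lambda^*<v_{\max}$ always, this yields $\lambda^*\in(0,v_{\max})\iff\lambda^*>0\iff\phi(0)>0$, which is the claimed equivalence. The step I expect to be the real obstacle is the behavior as $\lambda\to-\infty$: the entire existence argument rests on $\phi\to+\infty$, which fails when $t^2=n$ (there $\varphi_k$ collapses to a nonpositive constant and the root can disappear), so one must invoke that $t=\sqrt n$ is among the trivial cases excluded in the introduction. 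A secondary care point is stitching strict monotonicity across breakpoints and across the constant pieces so that $\phi$ changes sign monotonically through $\lambda^*$, which is precisely what makes $\phi(0)>0$ an exact criterion.
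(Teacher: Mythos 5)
Your proof is correct and takes essentially the same route as the paper's: both read off the sign of $\phi$ from the piecewise monotonicity and convexity recorded in Proposition~\ref{lem.psi} and Lemma~\ref{lem.QuadFam}, anchor at $\phi(v_{\max})=0$, and split part (iii) into the sub-cases $I_{j_t}>t^2$ and $I_{j_t}=t^2$. If anything, you are more complete than the paper in two spots it leaves implicit: the coercivity $\phi(\lambda)\to+\infty$ as $\lambda\to-\infty$, which is what actually yields \emph{existence} of the root and genuinely fails when $t^2=n$ (consistent with the algorithms' assumption $t\in(1,\sqrt{n})$), and the explicit global sign pattern $\phi>0$ on $(-\infty,\lambda^*)$, $\phi<0$ on $(\lambda^*,v_{\max})$ that justifies the final ``root in $(0,v_{\max})$ if and only if $\phi(0)>0$'' claim, which the paper's proof does not argue explicitly.
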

\begin{proof}(i) If $I_1-t^2>0$, then $j_t=1$. By Proposition \ref{lem.psi}(ii), $\phi$ is strictly decreasing on $(-\infty, v_{\max})$. Therefore, part (i) is true.

(ii)
If $I_1-t^2=0$, then $j_t=1$ and $k>1$ since $I_1=t^2<n$. By Proposition \ref{lem.psi}(iii), $\phi$ is decreasing on $(-\infty, \lambda_2)$.  Hence part (ii) is true.   

(iii)
If $I_1-t^2<0$, then $j_t\ge 2$ and $\phi$ is strictly increasing on $[\lambda_{j_t}, \lambda_1)$ by Proposition \ref{lem.psi} (iv).
Now we consider two cases.  If $I_{j_t}>t^2$, $\phi$ is decreasing on $(-\infty, \lambda_{j_t})$ by Proposition \ref{lem.psi} (iv); this
together with the fact $\phi$ is continuous and $\phi(\lambda_1)=0$, implies part (iii) is true. If $I_{j_t}=t^2$,
 $\phi$ is strictly decreasing on $(-\infty, \lambda_{j_t+1})$ and keeps a negative constant 
 by Proposition \ref{lem.psi} (iv) because $j_t\ge 2$ and $n_{j_t}=t^2$. This implies that $\phi(\lambda)$ attains 0 only once  on $(-\infty, v_{\max})$,  and more precisely we know  the root lies in $(-\infty, \lambda_{j_t+1})$. Overall, we know part (iii) is true. 
\end{proof}

\section{Characterizing the solution}\label{sec.solution}

In this section, we use  $\phi$   to characterize the
solution of \eqref{prob.b1b2}, \eqref{prob.b1s2} and \eqref{prob.s1s2}.
  Notice that  \eqref{prob.b1b2} 
is convex;  \eqref{prob.b1s2} and \eqref{prob.s1s2} are nonconvex.
We develop a unified framework using the partial Lagrangian duality,  which takes form 
\[ L(\bx, \lambda, \mu) = \tfrac{1}{2}\|\bx-\bv\|_2^2 + \lambda\Big(\sum_{i=1}^n x_i - t\Big) + \tfrac{\mu}{2}\Big(\sum_{i=1}^n x_i^2 - 1\Big). \]
Here for each problem the dual variables $\lambda$ is associated with the $\ell_1$ ball/sphere constraint 
and $\mu$ is associated with the $\ell_2$ ball/sphere  constraint, respectively.
The dual function is   given by
\begin{equation}\label{eq:dualF}\tag{$\mathcal{P}$}
g(\lambda, \mu) =\inf_{\bx\in\mathbb{R}_+^n} L(\bx, \lambda, \mu).
\end{equation}
The properties of $g$ are  analyzed in the following lemma.

\begin{lemma}\label{lemma.dualF}
For given $\lambda, \mu\in \R$, the following hold.
\begin{enumerate}
 \item[(i)] Suppose  $\mu=-1$.
  If $\lambda>v_{\max}$,  then  the optimal solution of \eqref{eq:dualF} is $\bx =0$; if $\lambda=v_{\max}$, then any $\bx$ satisfying
 \begin{equation}\label{eq:PriSol1}
 x_i\ge 0, i\in \mathcal{I}_1,\text{ and }\  x_i=0, i\notin \mathcal{I}_1;
\end{equation} 
is optimal. 
  In both cases, we have
 \[g(\lambda, \mu)=\frac{1}{2}\|\bv\|_2^2-\lambda t+\frac{1}{2}. \]

 \item[(ii)] Suppose $\mu>-1$.  The solution of \eqref{eq:dualF} is
\begin{equation}\label{eq:PriSol2}
 \bx(\lambda, \mu)=\frac{1}{1+\mu}(\bv-\lambda \mathbf{1})^+. 
\end{equation}
with dual function being
\begin{equation*}\label{eq:PriSol21}
g(\lambda, \mu)=\frac{1}{2}\|\bv\|_2^2-\lambda t-\frac{\mu}{2}-\frac{1}{2(1+\mu)}\|(\bv-\lambda \mathbf{1})^+\|^2_2 
\end{equation*}
and  partial derivative
\begin{align}
\frac{\partial g}{\partial \lambda} & =\frac{1}{1+\mu} \| (\bv -\lambda\mathbf{1})^+\|_1 - t,
\label{eq:ParLamd} \\
 \frac{\partial g}{\partial \mu} &=\frac{1}{(1+\mu)^2} \| (\bv -\lambda\mathbf{1})^+\|_2^2 - 1. \label{eq:ParMu}
\end{align}
Moreover, $\nabla g(\lambda^*, \mu^*) = 0$ if and only if $\phi(\lambda^*)=0$ with $\lambda^* \in (-\infty, v_{\max})$ and $\mu^* = \|(\bv- \lambda^* \boldsymbol{1})^+\|_2 - 1$. In addition, $\bx(\lambda^*, \mu^*)$ reduces to $\bx^*= (\bv -\lambda^*\mathbf{1})^+/\|(\bv -\lambda^*\mathbf{1})\|_2$.
 \item[(iii)] If $\mu < -1$ or $\mu=-1$ and $\lambda < v_{\max}$, then $g(\lambda, \mu) = -\infty$.
\end{enumerate}
\end{lemma}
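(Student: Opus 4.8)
The plan is to exploit the separability of $L$ across the coordinates $x_i$. First I would collect terms to write
\[ L(\bx,\lambda,\mu) = \sum_{i=1}^n\Big(\tfrac{1+\mu}{2}x_i^2 - (v_i-\lambda)x_i\Big) + \tfrac12\|\bv\|_2^2 - \lambda t - \tfrac{\mu}{2}, \]
so that $g(\lambda,\mu)$ decomposes into $n$ independent scalar minimizations of $h(x_i)=\tfrac{1+\mu}{2}x_i^2-(v_i-\lambda)x_i$ over $x_i\ge 0$, plus the additive constant. The sign of the leading coefficient $\tfrac{1+\mu}{2}$ then drives the entire argument, which is exactly the trichotomy $\mu>-1$, $\mu=-1$, $\mu<-1$ appearing in the three parts.

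For part (ii), with $\mu>-1$ each $h$ is a strictly convex parabola whose constrained minimizer over $[0,\infty)$ is $x_i=(v_i-\lambda)^+/(1+\mu)$, giving \eqref{eq:PriSol2}; substituting back and summing the per-coordinate minima $-\tfrac{1}{2(1+\mu)}\big((v_i-\lambda)^+\big)^2$ yields the stated closed form for $g$. I would then obtain \eqref{eq:ParLamd}--\eqref{eq:ParMu} by direct differentiation, using $\tfrac{d}{d\lambda}\|(\bv-\lambda\mathbf 1)^+\|_2^2 = -2\|(\bv-\lambda\mathbf 1)^+\|_1$; although $\|(\bv-\lambda\mathbf 1)^+\|_2^2$ is only piecewise quadratic, this derivative is continuous across the breakpoints $\lambda_j$, so $g$ is genuinely $C^1$ in $\lambda$ and no one-sided subtleties arise. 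For the equivalence, setting $\partial g/\partial\mu=0$ forces $(1+\mu)^2=\|(\bv-\lambda\mathbf 1)^+\|_2^2$, hence $\mu^*=\|(\bv-\lambda^*\mathbf 1)^+\|_2-1$ (the sign of the root fixed by $1+\mu>0$); feeding this into $\partial g/\partial\lambda=0$ gives $\|(\bv-\lambda^*\mathbf 1)^+\|_1=t\|(\bv-\lambda^*\mathbf 1)^+\|_2$, which upon squaring is precisely $\phi(\lambda^*)=0$. The converse runs the same chain backwards, and the reduction of $\bx(\lambda^*,\mu^*)$ to the normalized vector $(\bv-\lambda^*\mathbf 1)^+/\|(\bv-\lambda^*\mathbf 1)^+\|_2$ is then immediate.

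For part (i), $\mu=-1$ annihilates the quadratic term and leaves $h(x_i)=-(v_i-\lambda)x_i$, a linear function on $[0,\infty)$: its infimum is $0$ (attained at $x_i=0$) when $v_i<\lambda$, equals $0$ for every $x_i\ge 0$ when $v_i=\lambda$, and equals $-\infty$ when $v_i>\lambda$. Hence for $\lambda>v_{\max}$ the unique minimizer is $\bx=0$; for $\lambda=v_{\max}$ exactly the vectors in \eqref{eq:PriSol1} are optimal; and in either case the objective equals the constant $\tfrac12\|\bv\|_2^2-\lambda t+\tfrac12$. Part (iii) is the complementary tail of the same case analysis: if $\mu<-1$ the coefficient $\tfrac{1+\mu}{2}$ is negative, so each $h$ is a downward parabola unbounded below on $[0,\infty)$ and $g=-\infty$; if $\mu=-1$ and $\lambda<v_{\max}$, some index has $v_i>\lambda$ and its linear term again drives $g$ to $-\infty$.

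I expect the bookkeeping to be routine; the only genuinely delicate step is the forward/backward passage between $\nabla g=0$ and $\phi=0$. Specifically, one must verify that the squaring step is reversible without sign ambiguity, and this is where the standing restriction $\lambda^*<v_{\max}$ must be invoked: it guarantees $(\bv-\lambda^*\mathbf 1)^+\ne 0$, so the positive square root is legitimate and the reconstructed $\mu^*=\|(\bv-\lambda^*\mathbf 1)^+\|_2-1$ satisfies $\mu^*>-1$, placing the stationary point in the branch of part (ii) rather than on the boundary $\mu=-1$ treated in part (i).
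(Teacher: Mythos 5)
Your proposal is correct and takes essentially the same route as the paper: the paper likewise minimizes the separable Lagrangian explicitly (via KKT multipliers for $\bx\ge 0$ rather than your coordinate-wise parabola argument, a cosmetic difference) and proves $\nabla g(\lambda^*,\mu^*)=0 \Leftrightarrow \phi(\lambda^*)=0$ by the same elimination of $\mu$, with your handling of the squaring/sign reversibility via $\lambda^*<v_{\max}$ matching—indeed cleaning up—the paper's argument. One caveat: direct differentiation actually yields $\frac{\partial g}{\partial \mu}=\frac{1}{2}\bigl(\frac{1}{(1+\mu)^2}\|(\bv-\lambda\mathbf{1})^+\|_2^2-1\bigr)$, so \eqref{eq:ParMu} as printed is off by a factor $\tfrac12$ (a typo already in the paper), which is harmless here since only the zero set of the partial derivatives is ever used.
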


\begin{proof}(i) Suppose $\mu = -1$.  We have
$L(\bx,\lambda,\mu) = ( \lambda\mathbf{1} -\bv)^T\bx- \lambda t  - \frac{\mu}{2} + \frac{1}{2} \|\bv\|_2^2. $
Clearly, if $\lambda > v_{\max}$, the optimal solution  of \eqref{eq:dualF} is $\bx =0$; if
$\lambda =  v_{\max}$, the solution must satisfy \eqref{eq:PriSol1}. The rest of (i) is trivial.

(ii) Suppose $\mu > -1$.  Let $\boldsymbol{\zeta}\in\mathbb{R}^n$ be the multipliers for $\bx\in\mathbb{R}_+^n$.  The optimal $(\bx, \boldsymbol{\zeta})$ must satisfy
\[ \bx- \bv + \lambda \boldsymbol{1} + \mu \bx - \boldsymbol{\zeta} = 0,\ \bx^T\boldsymbol{\zeta} = 0, \  \bx \ge 0,\  \boldsymbol{\zeta}\ge 0.\]
If $x_i > 0$, meaning $\zeta_i = 0$, it follows that
$x_i = \frac{1}{1+\mu} (v_i - \lambda)$.  If $x_i = 0$, it follows that
$v_i - \lambda =  - \zeta_i \le 0$.
Therefore, \eqref{eq:PriSol2} is  true, and \eqref{eq:PriSol21}, \eqref{eq:ParLamd}  and \eqref{eq:ParMu}
can be computed accordingly.

Now, suppose $(\lambda^*, \mu^*)$ is stationary for $g$.  It holds that $\nabla g(\lambda^*, \mu^*) = 0$, implying $\lambda^* \in (-\infty, v_{\max})$ and
\[\phi(\lambda^*) = (1+\mu^*)^2 [\frac{\partial }{\partial \lambda}g(\lambda^*, \mu^*) + t]^2 - (1+\mu^*)^2 t^2 [\frac{\partial }{\partial \mu}g(\lambda^*, \mu^*)+1]^2 =  0.\]
Conversely, if $\phi(\lambda^*) = 0$ with $\lambda^* \in (-\infty, v_{\max})$,  letting
\[\mu^* = \sqrt{\| (\bv - \lambda^* \mathbf{1})^+\|_2} - 1,\]
we can see $\frac{\partial  }{\partial \lambda}g(\lambda^*, \mu^*)  = 0$ and $\frac{\partial  }{\partial \mu}g(\lambda^*, \mu^*) = 0$. Hence $(\lambda^*, \mu^*)$ is stationary
for $g$.
 This completes the proof of part (ii).

(iii) It can be verified trivially.

\end{proof}

\subsection{Projection onto $\mathbb{B}_1^t \cap \mathbb{B}_2 \cap \R^n_+$}

We first use the dual to analyze the properties of the solution of \eqref{prob.b1b2}.
Consider the Lagrangian dual problem of \eqref{prob.b1b2} 
\begin{equation}\label{eq:DualnonneBB}\tag{$\mathcal{D}_1$}
\begin{array}{ll}
{\rm maximize} & g(\lambda, \mu)\\
{\rm subject\ to}  & \lambda\ge 0, \mu \ge 0.
\end{array}
\end{equation}
Let $(\lambda^*,\mu^*)$ solve dual   \eqref{eq:DualnonneBB}. If the solution $\bx^*$ of \eqref{eq:dualF} for given $(\lambda^*,\mu^*)$ is feasible for \eqref{prob.b1b2} and satisfies the complementary condition
\begin{equation}\label{eq:nonneBBCC}
\lambda^*(\mathbf{1}^T\bx^*-t)=0 \text{and}\ \ \mu^*(\|\bx^*\|_2^2-1)=0,
\end{equation}
then $g(\lambda^*,\mu^*)=\tfrac{1}{2}\|\bx^*-\bv\|_2^2.$ We know $\bx^*$ solves \eqref{prob.b1b2}.
By the first-order optimality condition, $(\lambda^*,\mu^*)$ solves \eqref{eq:DualnonneBB} if and only if
\begin{equation}\label{eq:OptCforD}
\frac{\partial g(\lambda^*, \mu^*)}{\partial \lambda}(\lambda-\lambda^*)+ \frac{\partial g(\lambda^*, \mu^*)}{\partial \mu} (\mu-\mu^*)\le 0, \ \text{ for any }  \lambda\ge 0, \mu\ge 0.
\end{equation}

\begin{theorem}\label{thm.nonnega}
Let $\bx$ be the optimal solution of \eqref{prob.b1b2}. Then one of the following statements must be true:
\begin{enumerate}
\item[(i)]  $\|\bv^+\|_2 \le 1$ and $\|\bv^+\|_1 \le t$. In this case,  $\bx= \bv^+$.
\item[(ii)]  $\| \bv^+\|_2 > 1$ and $\| \bv^+\|_1 \le t\|\bv^+\|_2$.  In this case, $\bx = \bv^+/\|\bv^+\|_2$.
\item[(iii)] $\|\bv^+\|_1>t$ and $\|\bv^+\|_1 > t\|\bv^+\|_2$. In this case,  $\psi(\lambda)=0$ has a unique root $\hat\lambda$ in $(0,v_{\max})$.  Furthermore, if
$\|(\bv-\hat\lambda\mathbf{1})^+\|_2 \le 1$,  then $\bx = (\bv-\hat\lambda\mathbf{1})^+$; Otherwise,
$\phi(\lambda)=0$ has a unique root  $\lambda^*$ in $(0,\hat\lambda)$, and
\begin{equation}\label{eq:SolPri}
\bx = (\bv -\lambda^*\mathbf{1})^+/\|(\bv -\lambda^*\mathbf{1})\|_2.
\end{equation}
\end{enumerate}
\end{theorem}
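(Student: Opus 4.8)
The plan is to certify the optimum through the Lagrangian dual \eqref{eq:DualnonneBB}, exploiting that \eqref{prob.b1b2} is a strictly convex program (so its minimizer is unique) for which a feasible primal point meeting the dual optimality condition \eqref{eq:OptCforD} and the complementary slackness \eqref{eq:nonneBBCC} must be optimal. Concretely, in each case I would exhibit a pair $(\lambda^*,\mu^*)\ge 0$, recover the candidate $\bx^*=\bx(\lambda^*,\mu^*)$ from \eqref{eq:PriSol2}, and verify \eqref{eq:OptCforD}, primal feasibility, and \eqref{eq:nonneBBCC}; the partial derivatives \eqref{eq:ParLamd}--\eqref{eq:ParMu} of Lemma \ref{lemma.dualF} handle all the computations. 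Before the case analysis I would record that, writing $a=\|\bv^+\|_1$ and $b=\|\bv^+\|_2$ with $b\le a$, the three stated hypotheses are mutually exclusive and exhaustive, so that the case that holds pins down the activity pattern of the two ball constraints.

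For (i) I take $\lambda^*=\mu^*=0$, giving $\bx^*=\bv^+$; feasibility is exactly $b\le 1$ and $a\le t$, complementary slackness is automatic, and \eqref{eq:OptCforD} reduces to $\partial g/\partial\lambda=a-t\le 0$ and $\partial g/\partial\mu=b^2-1\le 0$. For (ii) only the $\ell_2$ constraint is active: I set $\lambda^*=0$ and choose $\mu^*$ so that $\|\bx^*\|_2=1$, which by \eqref{eq:PriSol2} forces $1+\mu^*=b>1$ and $\bx^*=\bv^+/b$; then $\partial g/\partial\mu=0$ by construction, while $\partial g/\partial\lambda=a/b-t\le 0$ is precisely the hypothesis $a\le tb$, so \eqref{eq:OptCforD} and \eqref{eq:nonneBBCC} hold.

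Case (iii) is the substantive one, where the $\ell_1$ constraint is active. Since $a>t$ gives $\psi(0)>0$, Proposition \ref{lem.phi} supplies a unique root $\hat\lambda\in(0,v_{\max})$ of $\psi$, so $\|(\bv-\hat\lambda\mathbf{1})^+\|_1=t$. The decisive step is the bridge identity
\[
\phi(\hat\lambda)=t^2\big(1-\|(\bv-\hat\lambda\mathbf{1})^+\|_2^2\big),
\]
obtained by substituting $\|(\bv-\hat\lambda\mathbf{1})^+\|_1=t$ into the definition of $\phi$. If $\|(\bv-\hat\lambda\mathbf{1})^+\|_2\le 1$ then $\phi(\hat\lambda)\ge 0$, the $\ell_2$ constraint stays inactive, and $(\lambda^*,\mu^*)=(\hat\lambda,0)$ with $\bx^*=(\bv-\hat\lambda\mathbf{1})^+$ is a valid certificate (here $\partial g/\partial\lambda=0$ and $\partial g/\partial\mu=\|(\bv-\hat\lambda\mathbf{1})^+\|_2^2-1\le 0$). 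Otherwise $\phi(\hat\lambda)<0$; combined with $\phi(0)=a^2-t^2b^2>0$ (the hypothesis $a>tb$) and continuity, this forces a root of $\phi$ in $(0,\hat\lambda)$, and since $\phi$ dips below zero on $(0,v_{\max})$ we must have $I_1<t^2$, so Proposition \ref{pro.psiroot}(iii) makes it the unique root $\lambda^*$ on $(0,v_{\max})$. Finally, monotonicity of $\lambda\mapsto\|(\bv-\lambda\mathbf{1})^+\|_2$ gives $\mu^*=\|(\bv-\lambda^*\mathbf{1})^+\|_2-1>0$, and Lemma \ref{lemma.dualF}(ii) returns the stationary dual pair together with $\bx^*$ as in \eqref{eq:SolPri}, which one checks is feasible ($\|\bx^*\|_1=t$ because $\phi(\lambda^*)=0$) and complementary.

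The main obstacle is precisely case (iii): one must show the $\ell_1$ constraint is genuinely active, correctly locate the $\phi$-root $\lambda^*$ relative to $\hat\lambda$, and guarantee $\mu^*\ge 0$. All three rest on the bridge identity and on verifying that $I_1<t^2$ holds whenever $\phi$ becomes negative, so that the uniqueness statement of Proposition \ref{pro.psiroot}(iii) may be invoked; I would also be careful to confirm the dual variational inequality \eqref{eq:OptCforD}, and not merely stationarity of $g$, in each subcase.
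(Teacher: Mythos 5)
Your proposal is correct and takes essentially the same approach as the paper's proof: the identical dual certificates $(\lambda^*,\mu^*)$ in each of the cases (i), (ii), (iii), the same bridge identity $\phi(\hat\lambda)=t^2\bigl(1-\|(\bv-\hat\lambda\mathbf{1})^+\|_2^2\bigr)$ obtained by substituting $\psi(\hat\lambda)=0$ into the definition of $\phi$, and the same appeal to Proposition~\ref{pro.psiroot} to locate the unique root $\lambda^*\in(0,\hat\lambda)$ with $\mu^*=\|(\bv-\lambda^*\mathbf{1})^+\|_2-1>0$ via monotonicity. Your explicit remark that $\phi(\hat\lambda)<0$ forces $I_1<t^2$ before invoking Proposition~\ref{pro.psiroot}(iii) spells out a step the paper leaves implicit, but the argument is otherwise the paper's own.
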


\begin{proof}
Case (i).
If $\|\bv^+\|_2 \le 1$ and $\|\bv^+\|_1 \le t$, we can see  $\lambda^*=0, \mu^*=0$ satisfies the optimality condition \eqref{eq:OptCforD} by \eqref{eq:ParLamd} and \eqref{eq:ParMu}. In this case, by \eqref{eq:PriSol2}, the solution of \eqref{eq:dualF} is $\bx^*=\bv^+$. Obviously  $\bv^+$ is feasible for \eqref{prob.b1b2}, and  $(\lambda^*, \mu^*, \bx^*)$ 
satisfies \eqref{eq:nonneBBCC}.

Case (ii).
If $\| \bv^+\|_2 > 1$ and $\| \bv^+\|_1 \le t\|\bv^+\|_2$, we can see  that $\lambda^*=0, \mu^*=\|\bv^+\|_2-1$ satisfies the optimality condition \eqref{eq:OptCforD} by \eqref{eq:ParLamd} and \eqref{eq:ParMu}.
In this case, by \eqref{eq:PriSol2}, the solution of \eqref{eq:dualF} is $\bx^*=\bv^+/\|\bv^+\|_2$. Obviously $\bx^*$ is feasible for \eqref{prob.b1b2}, and
 $(\lambda^*, \mu^*, \bx^*)$ 
  satisfies \eqref{eq:nonneBBCC}.

Case (iii).
Now consider the case that neither (i) nor (ii) happens.
 Since case (i) is not satisfied,  we know $\bv$ must satisfy
 \[ \|\bv^+\|_2 > 1 \quad\text{ or} \quad \|\bv^+\|_1 > t.\]
On the other hand, case (ii) is  not satisfied,  so that we know $\bv$ must satisfy
\[ \|\bv^+\|_2 \le 1\quad \text{or} \quad \|\bv^+\|_1 > t \|\bv^+\|_2.\] There are four subcases  to consider.  (a) $\|\bv^+\|_2>1$ and $\|\bv^+\|_2\le 1$, which can
never happen.  (b) $\|\bv^+\|_2>1$ and $\|\bv^+\|_1 > t\|\bv^+\|_2$, implying $\|\bv^+\|_1 > t$.  (c)  $\|\bv^+\|_1 > t$ and $\|\bv^+\|_2 \le 1$, indicating
$\|\bv^+\|_1 > t \ge t\|\bv^+\|_2$.  (d) $\|\bv^+\|_1>t$ and $\|\bv^+\|_1 > t\|\bv^+\|_2$.

Overall, we have shown that if $\bv$ does not satisfy Case  (i) nor (ii), then it must be true that   $\|\bv^+\|_1>t$ and $\|\bv^+\|_1 > t\|\bv^+\|_2$.
It follows that  $\psi(0)=\|\bv^+\|_1-t>0$ and $\phi(0)=\|\bv^+\|_1^2-t^2\|\bv^+\|_2^2>0$. From Proposition \ref{lem.phi}, $\psi=0$ has a unique root $\hat\lambda$ on  $(0,\lambda_1)$.  We  consider two situations.

First, if $\|(\bv-\hat\lambda\mathbf{1})^+\|_2 \le 1$, we can see  that $(\lambda, \mu) = (\hat\lambda, 0)$ satisfies the optimality condition \eqref{eq:OptCforD} by \eqref{eq:ParLamd} and \eqref{eq:ParMu}. Therefore, by \eqref{eq:PriSol2}, the solution of \eqref{eq:dualF} is $\bx=(\bv-\hat\lambda\mathbf{1})^+$. Obviously,  $(\bx^*, \lambda^*, \mu^*) =((\bv-\hat\lambda\mathbf{1})^+, \hat\lambda, 0)$ satisfy the complementary slackness and  is primal-dual feasible for \eqref{prob.b1b2}.

Second,  if $\|(\bv-\hat\lambda\mathbf{1})^+\|_2 > 1$, then
\[ \phi(\hat\lambda) = \| (\bv - \hat\lambda\mathbf{1})^+\|_1^2 - t^2\|(\bv-\hat\lambda\mathbf{1})^+\|_2^2 = t^2(1-\|(\bv-\hat\lambda\mathbf{1})^+\|_2^2) < 0,\]
where the second equality follows $\psi(\hat\lambda)=0$, i.e. $\|(\bv-\hat\lambda\bm 1)^+\|_1=t$.
It follows from Proposition \ref{pro.psiroot}  that $\phi(\lambda)=0$ has a unique root $\lambda^*$ on $(0, \hat\lambda)$, meaning  $\lambda^*<\hat\lambda$. This implies $\| (\bv-\lambda^*\mathbf{1})^+\|_2 \ge \| (\bv-\hat\lambda\mathbf{1})^+\|_2 > 1$.
We can see that $(\lambda^*, \mu^*)$, with $\mu^*=  \|(\bv-\lambda^*\mathbf{1})^+\|_2-1$ satisfies the optimality condition \eqref{eq:OptCforD} by \eqref{eq:ParLamd} and \eqref{eq:ParMu}, and it follows from
 \eqref{eq:PriSol2} that the solution of \eqref{eq:dualF} is \eqref{eq:SolPri}.
Therefore,  $\|\bx^*\|_2 = 1$ and
\[ \mathbf{1}^T\bx^* = \| (\bv - \lambda^*\mathbf{1})^+ \|_1/\|(\bv-\lambda^*\mathbf{1})^+\|_2 = t\]
from  $\phi(\lambda^*)=0$. Obviously,  $(\bx^*, \lambda^*, \mu^*) = ((\bv -\lambda^*\mathbf{1})^+/\|(\bv -\lambda^*\mathbf{1})\|_2,  \lambda^*, \|(\bv-\lambda^*\mathbf{1})^+\|_2-1)$  satisfies \eqref{eq:nonneBBCC}  and is primal-dual feasible for \eqref{prob.b1b2}.  This completes the proof.
\end{proof}

Table~\ref{tab.1} summarizes the four cases (Case (iii) is further split into two subcases) described in Theorem~\ref{thm.nonnega},
which are represented by the four regions illustrated  in $\mathbb{R}^2$ in  Figure~\ref{fig.1}.

Region $C_{\text{o}}$:  If $\bv^+\|_1 \le t$ and $\|\bv^+\|_2\le 1$,  this indicates that $\bv$ is in the feasible region. The projection of $\bv$
is simply $\bv$.  This corresponds to Case (i).

Region $C_{\text{I}}$:
If $\|\bv\|_1 \le t^2\|\bv\|_2$, in $\mathbb{R}^2$,
this condition  is equivalent to
\[ (t^2-1)v_2 \le  (1-t\sqrt{2-t^2})v_1\ \text{ or }\  (t^2-1) v_2 \ge ( {1+t\sqrt{2-t^2}}) v_1\] (note that it is assumed
$t < \sqrt{2}$).
If $\bv$ lies in region $C_{\text{I}}$, then the projection of $\bv$ reverts to the projection onto
the $\ell_2$ ball, which is simply $\bv/\|\bv\|_2$.  This corresponds to Case (ii).

Now suppose  $\bv$ does not lie in region $C_{\text{o}}$ nor $C_{\text{I}}$. This
corresponds to the situation of  Theorem \ref{thm.nonnega}(iii), i.e.,
$\|\bv\|_1 > t$ and $\|\bv\|_1 > t \|\bv\|_2$.
It follows that
$\psi(\lambda)  = 0$ has a positive root $\hat\lambda$.
%
%
%

Region $C_{\text{II}}$:
if $v_1-\sqrt{2-t^2} \le v_2 \le v_1 + \sqrt{2-t^2}$, then
$v_1 \ge \frac{v_1+v_2-t}{2}$ and $v_2 \ge \frac{v_1+v_2-t}{2}$.
It is easy to see that
$\hat \lambda $ satisfies $\psi(\hat\lambda)  = t$ and
$\|(\bv-\hat\lambda\mathbf{1})^+\|_2 \le 1$.
This corresponds to the region $C_{\text{II}}$ in Figure~\ref{fig.1}, and the
projection of $\bv$ is simply the projection onto the $\ell_1$ ball, which is
$(\bv-\hat\lambda\mathbf{1})^+$.
This corresponds to Case (iii)-a.

%
%
%

Region $C_{\text{III}}$:
if $(v_2-v_1)^2 > 2-t^2$ and $\|\bv\|_1^2 > t^2\|\bv\|_2^2$,  the projection of $\bv$ onto $\ell_1$ ball is outside the $\ell_2$ ball, i.e.,
$\|(\bv-\hat\lambda\mathbf{1})^+\|_2 > 1$. This corresponds to Case (iii)-b.

%
%
%
%
%
%
%

\begin{table}[htb]
\centering
\caption{Four cases of $( \lambda^*, \mu^*, \bx^*)$ where $\hat \lambda$ is the root of $\psi$.}\label{tab.1}
\begin{tabular}{lcclcl}\hline
& Case & $\bx^*$ & $\lambda^*$  &  $\mu^*$  & Region \\ \hline
(i) &  $\|\bv^+\|_1\le t, \|\bv^+\|_2\le 1$   &   $\bv^+$   &  0   &  0 & $C_{\text{o}}$ \\
(ii) & $\|\bv^+\|_2>1, \|\bv^+\|_1\le t\|\bv^+\|_2$  &  $\bv^+/\|\bv^+\|_2$   &  0   &  $\|\bv^+\|_2-1$ & $C_{\text{I}}$ \\
(iii)-a & $\|(\bv-\hat\lambda\mathbf{1})^+\|_2\le1$ & $(\bv-\hat\lambda\mathbf{1})^+$   &   $\hat\lambda$   &   0 & $C_{\text{II}}$\\
(iii)-b & $\|(\bv-\hat\lambda\mathbf{1})^+\|_2 > 1$ & $\frac{(\bv-\lambda^*\mathbf{1})^+}{\|(\bv-\lambda^*\mathbf{1})^+\|_2}$  &   $\lambda^*$  &  $\|(\bv-\lambda^*\mathbf{1})^+\|_2-1$ & $C_{\text{III}}$ \\ \hline
\end{tabular}
\end{table}

\begin{figure}[htb]
\centering
\includegraphics[scale=0.36]{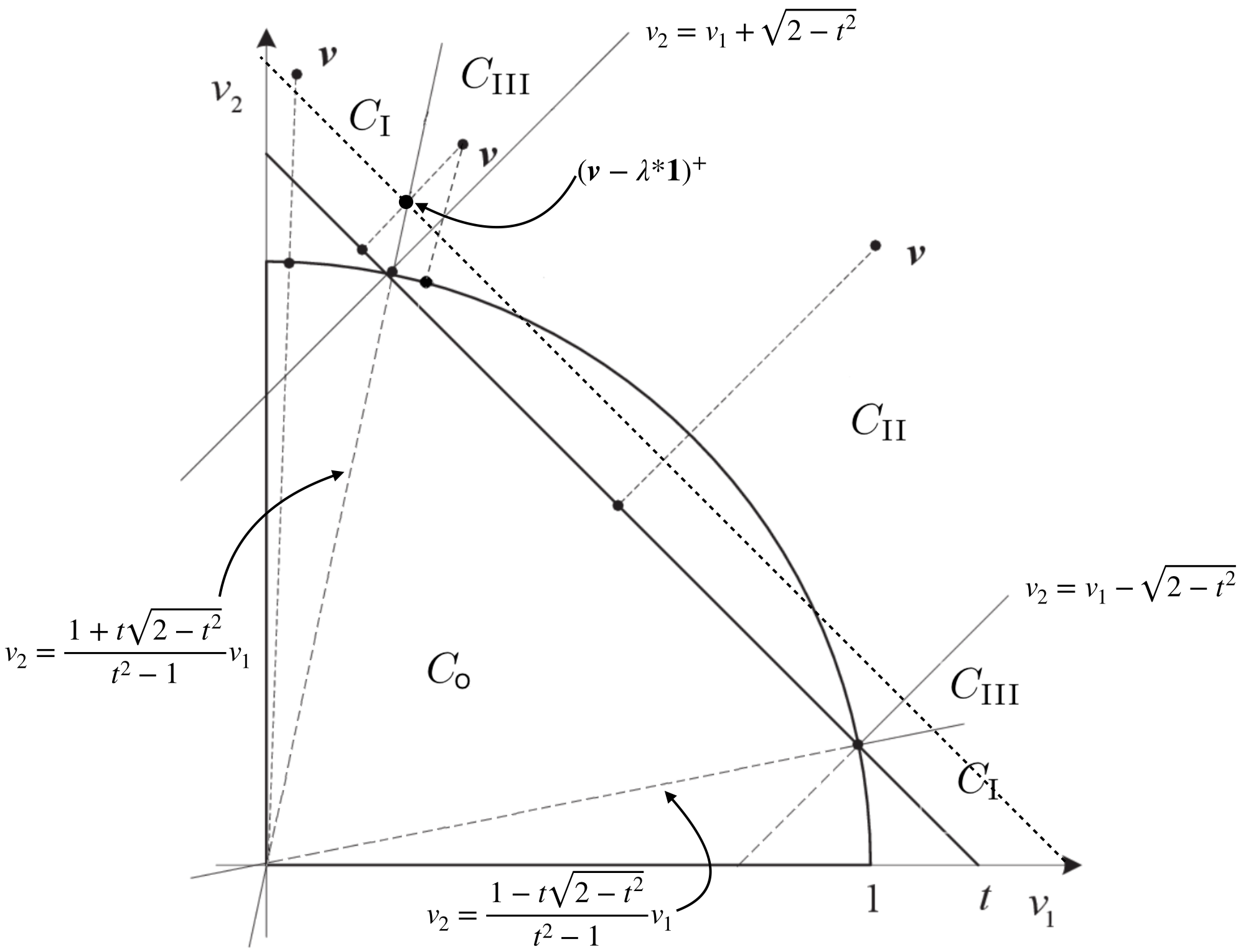}
\caption{The projection of $\bv$ in $\mathbb{R}^2$.}
\label{fig.1}
\end{figure}

\subsection{Projection onto $\mathbb{S}_1^t \cap \mathbb{S}_2 \cap \R^n_+$}

We characterize the projections onto $\mathbb{S}_1^t \cap \mathbb{S}_2 \cap \R^n_+$, i.e., the optimal solution of \eqref{prob.s1s2}.
By Theorem \ref{lemma.dualF}, we can consider the Lagrangian dual problem of \eqref{prob.s1s2},
\begin{equation}\label{eq:DualnonneSS}\tag{$\mathcal{D}_2$}
\begin{array}{ll}
{\rm maximize} & g(\lambda, \mu)\\
{\rm subject\ to}  & \mu \ge -1.
\end{array}
\end{equation}
and let $(\lambda^*,\mu^*)$ solve the dual   \eqref{eq:DualnonneSS}. If the solution $\bx^*$ of \eqref{eq:dualF} for given
 dual feasible  $(\lambda^*,\mu^*)$ is primal feasible for \eqref{prob.s1s2}, then $g(\lambda^*,\mu^*)=\tfrac{1}{2}\|\bx^*-\bv\|_2^2$. We get
 $\bx^*$ solves \eqref{prob.s1s2}.  Therefore, for such $\bx^*$, we only have to verify its satisfaction of the constraints of \eqref{prob.s1s2}.

\begin{theorem}\label{thm.nonneSS}
For any $\bv\in \R^n$, one of the following statements must be true:
\begin{enumerate}
\item[(i)] If $I_1>t^2$, the solution of \eqref{prob.s1s2} is not unique. Any solution of the system
\begin{equation}\label{proj.MSnonneSS}
\sum_{i\in \mathcal{I}_1}x_i=t, \sum_{i\in \mathcal{I}_1}x_i^2=1,x_i\ge 0, i\in \mathcal{I}_1, x_i=0, i\notin \mathcal{I}_1
\end{equation}
solves \eqref{prob.s1s2}.

\item[(ii)]  If $I_1=t^2$, then \eqref{prob.s1s2} has the unique solution
\[ x_i=1/\sqrt{I_1}, i\in \mathcal{I}_1\ \text{ and }\  x_i=0,  i\notin \mathcal{I}_1,\] which is also the unique solution of \eqref{proj.MSnonneSS}.

\item[(iii)] If $I_1<t^2$, then \eqref{prob.s1s2} has the unique solution $\bx = (\bv -\lambda^*\mathbf{1})^+/\|(\bv -\lambda^*\mathbf{1})^+\|_2,$
where $\lambda^*$ is the unique root of $\phi(\lambda)=0$ on $(-\infty, v_{\max})$.
\end{enumerate}
\end{theorem}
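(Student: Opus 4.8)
The plan is to exploit the partial Lagrangian duality set up above, combining weak duality with an explicit construction of a primal--dual pair in each of the three regimes. The backbone is the observation that for any $\bx$ feasible for \eqref{prob.s1s2} both constraint terms in $L(\bx,\lambda,\mu)$ vanish, so that
$g(\lambda,\mu)=\inf_{\by\ge 0}L(\by,\lambda,\mu)\le L(\bx,\lambda,\mu)=\tfrac12\|\bx-\bv\|_2^2$
for every dual-feasible $(\lambda,\mu)$ with $\mu\ge -1$. Consequently, if I produce a dual-feasible $(\lambda^*,\mu^*)$ together with a feasible $\bx^*$ that minimizes $L(\cdot,\lambda^*,\mu^*)$ over $\R^n_+$, then $\bx^*$ attains the weak-duality lower bound and is therefore optimal. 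Conversely, once such a strong-duality pair is in hand, every optimal $\bx$ must itself minimize $L(\cdot,\lambda^*,\mu^*)$; this converse is what I will use to characterize and count the optimal solutions.

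For cases (i) and (ii) I would take the boundary multiplier $(\lambda^*,\mu^*)=(v_{\max},-1)$. By Lemma~\ref{lemma.dualF}(i) the minimizers of $L(\cdot,v_{\max},-1)$ over $\R^n_+$ are exactly the vectors satisfying \eqref{eq:PriSol1}, and imposing in addition the two sphere constraints turns this condition into the system \eqref{proj.MSnonneSS}. A direct computation shows that every solution $\bx^*$ of \eqref{proj.MSnonneSS} satisfies $\bx^{*T}\bv=t\,v_{\max}$, whence $\tfrac12\|\bx^*-\bv\|_2^2=\tfrac12\|\bv\|_2^2-t\,v_{\max}+\tfrac12=g(v_{\max},-1)$; this closes the duality gap, so the optimal set of \eqref{prob.s1s2} coincides with the solution set of \eqref{proj.MSnonneSS}. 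The uniqueness dichotomy then follows from Cauchy--Schwarz on $\Ical_1$: the constraints $\sum_{i\in\Ical_1}x_i=t$ and $\sum_{i\in\Ical_1}x_i^2=1$ force $t^2\le I_1$, with equality if and only if the $x_i$ are all equal. Hence the system has the single solution $x_i=1/\sqrt{I_1}$ when $I_1=t^2$ (case (ii)) and more than one solution when $I_1>t^2$ (case (i)).

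For case (iii), where $I_1<t^2$, I would instead use the interior multiplier furnished by Proposition~\ref{pro.psiroot}(iii): $\phi$ has a unique root $\lambda^*\in(-\infty,v_{\max})$. Setting $\mu^*=\|(\bv-\lambda^*\mathbf 1)^+\|_2-1>-1$, Lemma~\ref{lemma.dualF}(ii) identifies $\bx^*=(\bv-\lambda^*\mathbf 1)^+/\|(\bv-\lambda^*\mathbf 1)^+\|_2$ as the unique Lagrangian minimizer; the identity $\phi(\lambda^*)=0$ yields $\|\bx^*\|_1=t$ and the normalization yields $\|\bx^*\|_2=1$, so $\bx^*$ is feasible. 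The same weak-duality matching as above shows $\bx^*$ is optimal, and uniqueness is immediate here: for $\mu^*>-1$ the Hessian of $L$ in $\bx$ equals $(1+\mu^*)I\succ 0$, so $L(\cdot,\lambda^*,\mu^*)$ is strictly convex and its minimizer over $\R^n_+$ is unique.

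The main obstacle is that \eqref{prob.s1s2} is nonconvex, so strong duality cannot be invoked abstractly and must be certified by hand in each regime by exhibiting a primal objective value that matches the dual value. The most delicate point is the boundary case $\mu^*=-1$ underlying (i)--(ii), where the Lagrangian degenerates into a linear function of $\bx$ and its minimizer set is an entire face of the orthant rather than a single point. Pinning down the optimal solutions there requires the separate Cauchy--Schwarz analysis on $\Ical_1$ to decide whether that face meets the two spheres in one point or in many, which is precisely the mechanism producing the uniqueness/non-uniqueness split between cases (ii) and (i).
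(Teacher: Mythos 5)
Your proof is correct and runs on the same engine as the paper's: weak duality for the partial Lagrangian $g$ from \eqref{eq:dualF}, the minimizer description in Lemma~\ref{lemma.dualF}, and the root structure of $\phi$ from Proposition~\ref{pro.psiroot}, with case (iii) essentially identical to the paper's. Where you genuinely diverge is in cases (i)--(ii): the paper treats them with different multipliers --- the boundary point $(\lambda^*,\mu^*)=(\lambda_1,-1)$ for (i), but for (ii) a whole family of interior stationary pairs $\lambda\in[\lambda_2,\lambda_1)$, $\mu=\|(\bv-\lambda\mathbf{1})^+\|_2-1$, reading off the common support $\Ical_1$ of the inner minimizers and then imposing feasibility --- whereas you use the single boundary multiplier $(v_{\max},-1)$ for both, close the gap by the direct computation $\bx^{*T}\bv=t\,v_{\max}$, and split (i) from (ii) by Cauchy--Schwarz applied to the system \eqref{proj.MSnonneSS}. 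Your route buys two things the paper leaves implicit: the converse weak-duality step (every optimum must minimize $L(\cdot,\lambda^*,\mu^*)$, hence lies in the face \eqref{eq:PriSol1}) turns the theorem's ``any solution of \eqref{proj.MSnonneSS} solves \eqref{prob.s1s2}'' into a genuine equality of the two solution sets, and your strict-convexity argument ($\mu^*>-1$ makes $L$ strongly convex in $\bx$) actually proves the uniqueness claims in (ii)--(iii), which the paper's proof asserts rather than argues; the paper's route, by contrast, avoids having to say anything about the multiplicity of the system. Two small loose ends you should tie off: in case (i) you need \emph{existence} of a solution to \eqref{proj.MSnonneSS} (available since $1\le t$ and $t^2<I_1$, e.g.\ by a two-level construction on $\Ical_1$), and non-uniqueness does not follow from Cauchy--Schwarz alone --- it shows no solution is constant on $\Ical_1$, after which permuting coordinates within $\Ical_1$ (legitimate since $v_i=v_{\max}$ there) produces a second solution. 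Neither point is addressed in the paper's own proof either, so these are refinements rather than gaps relative to it.
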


%
%

\begin{proof}  (i) If $I_1>t^2$, by Proposition \ref{pro.psiroot}(i), $\phi(\lambda)$ has no root on $(0, v_{\max})$.
By Lemma \ref{lemma.dualF}(ii),
$g(\lambda, \mu)$ has no stationary point in the region $\R\times (-1, \infty)$.
Therefore,  the optimal solution of \eqref{eq:DualnonneSS} is $\lambda^*=\lambda_1, \mu^*=-1$.
By Lemma \ref{lemma.dualF}(i),  any point $\bx$ satisfying \eqref{proj.MSnonneSS} must
be a solution of  \eqref{eq:dualF}, since it satisfies \eqref{eq:PriSol1}. Moreover, it satisfies the constraints of \eqref{prob.s1s2},
so that it is optimal. This completes the proof of   (i).
%
%
%

  (ii)  If  $I_1=t^2$, $\phi(\lambda)\equiv 0$ on $[\lambda_2,v_{\max})$ by Proposition \ref{pro.psiroot}(ii), we can see  that any $(\lambda,\mu)$ with $\lambda\in [\lambda_2, \lambda_1)$ and $\mu=\|(\bv-\lambda\bfo)^+\|_2 - 1$ is stationary for the dual   \eqref{eq:DualnonneSS}. For those $(\lambda, \mu)$, by Lemma \ref{lemma.dualF}(ii), the solution of  \eqref{eq:dualF}
satisfies $x_i(\lambda, \mu) > 0$, $i\in\Ical_1$ and $x_i(\lambda, \mu) = 0$, $i\notin\Ical_1$.
If we further require the satisfaction of
the constraints of  \eqref{prob.s1s2}, then there is only
a unique point
$x_i^*=\tfrac{1}{\sqrt{I_1}}$ for $i\in \mathcal{I}_1$ and $x_i^*=0$ for $i\notin \mathcal{I}_1$ since $I_1=t^2$. This completes the proof of   (ii).

  (iii) If $I_1<t^2$, $\phi(\lambda)=0$ has a unique root $\lambda^*$ in $(-\infty, v_{\max})$ by Proposition \ref{pro.psiroot}(iii).
Therefore, by \eqref{eq:ParMu}
  the optimal solution $(\lambda^*, \mu^*)$ of \eqref{eq:DualnonneSS} satisfies  $\mu^*=\|(\bv-\lambda^*\bfo)^+\|_2 - 1$.
  Given $(\lambda^*, \mu^*)$, by \eqref{eq:PriSol2}, the solution of \eqref{eq:dualF} is given in \eqref{eq:SolPri}.
It can be easily verified   that $\bx^*$  is feasible for \eqref{prob.s1s2}.  Therefore,
  $\bx^*$ solves \eqref{prob.s1s2}, completing the proof of  (iii).
\end{proof}

\subsection{Projection onto $\mathbb{B}_1^t \cap \mathbb{S}_2 \cap \R^n_+$}

We consider the Lagrangian dual problem of \eqref{prob.b1s2}
\begin{equation}\label{eq:DualnonneBS}\tag{$\mathcal{D}_3$}
\begin{array}{ll}
{\rm maximize} & g(\lambda, \mu)\\
{\rm subject\ to}  & \lambda\ge 0, \mu \ge -1.
\end{array}
\end{equation}
and let $(\lambda^*,\mu^*)$ be optimal for the dual   \eqref{eq:DualnonneBS}.
 If the optimal solution $\bx^*$   of \eqref{eq:dualF} for given dual feasible
  $(\lambda^*,\mu^*)$ is also primal  feasible for \eqref{prob.b1s2}
  and    $(\lambda^*, \mu^*)$ and $\bx^*$ satisfy complementarity
  \begin{equation}\label{eq:nonneBSCC}
  \lambda^*(\mathbf{1}^T\bx^*-t)=0,
  \end{equation}
   then $g(\lambda^*,\mu^*)=\tfrac{1}{2}\|\bx^*-\bv\|_2^2$. So $\bx^*$ is optimal for \eqref{prob.b1s2}.

We characterize the projections onto $\mathbb{B}_1^t \cap \mathbb{S}_2 \cap \R^n_+$ below. 

\begin{theorem}\label{thm.nonneBS}
For any $\bv\in \R^n$, one of the following statements must be true:
\begin{enumerate}
     \item[(i)] Suppose $v_{\max}>0$,  $I_1\le t^2$ and $\|\bv^+\|_1>t\|\bv^+\|_2$.  Then \eqref{prob.b1s2}  has a unique solution
$(\bv -\lambda^*\mathbf{1})^+/\|(\bv -\lambda^*\mathbf{1})\|_2,$ 
     where $\lambda^*$ is the unique root of $\phi(\lambda)=0$ on $(0, v_{\max})$.

      \item[(ii)] Suppose $v_{\max}>0$,  $I_1\le t^2$ and $\|\bv^+\|_1 \le t\|\bv^+\|_2$.  Then \eqref{prob.b1s2}  has a unique solution
      $\bv^+/\|\bv^+\|_2$.

     \item[(iii)] Suppose  $v_{\max}>0$ and $I_1>t^2$.  Then the solution of \eqref{prob.b1s2}  is not unique.
     Any point satisfying  \eqref{proj.MSnonneSS} is the optimal solution of \eqref{prob.b1s2}.
     \item[(iv)] Suppose $v_{\max}=0$. The solution of \eqref{prob.b1s2}  is not unique. Any $\bx$ satisfying
         \begin{equation}\label{proj.MSnonneBS1}
            \sum_{i\in \mathcal{I}_1}x_i \le t, \sum_{i\in \mathcal{I}_1} x_i^2 = 1, x_i\ge 0, i\in \mathcal{I}_1, x_i=0, i\notin \mathcal{I}_1
         \end{equation}
         is optimal for \eqref{prob.b1s2}.
     \item[(v)] Suppose $v_{\max}<0$. Any $\bx$ satisfying $x_i=1$ for some $i\in \mathcal{I}_1$ and $x_j=0$ for $j\neq i$ is
     optimal for  \eqref{prob.b1s2}.
\end{enumerate}
\end{theorem}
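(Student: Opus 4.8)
The plan is to follow the partial-Lagrangian route already used for Theorems~\ref{thm.nonnega} and \ref{thm.nonneSS}, exploiting the fact that on the feasible set $\|\bx\|_2=1$, so that $\tfrac12\|\bx-\bv\|_2^2=\tfrac12(1+\|\bv\|_2^2)-\bv^T\bx$ and \eqref{prob.b1s2} is equivalent to maximizing the linear form $\bv^T\bx$ over $\mathbb{B}_1^t\cap\mathbb{S}_2\cap\mathbb{R}^n_+$. I would first split on the sign of $v_{\max}$, and within $v_{\max}>0$ on whether $I_1>t^2$, $I_1=t^2$, or $I_1<t^2$; this is exactly the trichotomy governing the roots of $\phi$ through Proposition~\ref{pro.psiroot}. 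The dual \eqref{eq:DualnonneBS} and the gradient formulas \eqref{eq:ParLamd}--\eqref{eq:ParMu} of Lemma~\ref{lemma.dualF} then supply, in each branch, a candidate dual pair $(\lambda^*,\mu^*)$; I recover $\bx^*$ from \eqref{eq:PriSol2} (or \eqref{eq:PriSol1} when $\mu^*=-1$), check primal feasibility and the complementarity \eqref{eq:nonneBSCC}, and conclude optimality by the identity $g(\lambda^*,\mu^*)=\tfrac12\|\bx^*-\bv\|_2^2$.

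For the two \emph{regular} branches with $v_{\max}>0$ and $I_1\le t^2$, the $\ell_1$ multiplier $\lambda$ decides everything through the sign of $\phi(0)=\|\bv^+\|_1^2-t^2\|\bv^+\|_2^2$. In case (ii), $\phi(0)\le 0$ and I take $\lambda^*=0$, $\mu^*=\|\bv^+\|_2-1>-1$: then $\partial g/\partial\mu=0$ while $\partial g/\partial\lambda=\|\bv^+\|_1/\|\bv^+\|_2-t\le 0$, which is the correct first-order condition at the boundary $\lambda=0$ of \eqref{eq:DualnonneBS}; \eqref{eq:PriSol2} returns $\bx^*=\bv^+/\|\bv^+\|_2$, which is $\ell_1$-feasible and satisfies $\lambda^*(\mathbf{1}^T\bx^*-t)=0$. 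In case (i), $\phi(0)>0$ and (when $I_1<t^2$) Proposition~\ref{pro.psiroot}(iii) gives a unique root $\lambda^*\in(0,v_{\max})$; with $\mu^*=\|(\bv-\lambda^*\mathbf{1})^+\|_2-1>-1$ this is an interior stationary point of the concave $g$, hence dual-optimal, and \eqref{eq:PriSol2} yields $\bx^*=(\bv-\lambda^*\mathbf{1})^+/\|(\bv-\lambda^*\mathbf{1})^+\|_2$ with $\|\bx^*\|_1=t$. An equivalent and perhaps cleaner derivation of case (i) is to argue that the $\ell_1$ constraint must be active---if it were slack, the maximizer of $\bv^T\bx$ on $\mathbb{S}_2\cap\mathbb{R}^n_+$ would be $\bv^+/\|\bv^+\|_2$, whose $\ell_1$ norm exceeds $t$---and then invoke Theorem~\ref{thm.nonneSS} on \eqref{prob.s1s2}; the degenerate sub-case $I_1=t^2$, where $\phi$ is flat on $[\lambda_2,v_{\max})$, is absorbed here because every such $\lambda^*$ produces the same point $x_i=1/\sqrt{I_1}$ on $\mathcal{I}_1$.

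For the three non-unique branches I would abandon the closed-form root and argue directly from $\max\bv^T\bx$, which also certifies global optimality despite the nonconvex sphere constraint. When $v_{\max}>0$ and $I_1>t^2$ (case (iii)), $\bv^T\bx\le v_{\max}\|\bx\|_1\le v_{\max}t$ for every feasible $\bx$, with equality precisely when the support lies in $\mathcal{I}_1$ and $\|\bx\|_1=t$; since $I_1>t^2$ and $t\ge 1$, Cauchy--Schwarz guarantees such points exist, giving exactly the solution set \eqref{proj.MSnonneSS}. When $v_{\max}=0$ (case (iv)), $\bv^T\bx\le 0$ with equality iff the support avoids the strictly negative coordinates, i.e.\ lies in $\mathcal{I}_1=\{i:v_i=0\}$, which is \eqref{proj.MSnonneBS1}. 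When $v_{\max}<0$ (case (v)), the bound flips: $\bv^T\bx\le v_{\max}\|\bx\|_1\le v_{\max}$, using $\|\bx\|_1\ge\|\bx\|_2=1$ and $v_{\max}<0$, and equality forces both a single-coordinate support (because $\|\bx\|_1=\|\bx\|_2$ for nonnegative $\bx$ only at the axes) lying in $\mathcal{I}_1$ and $\|\bx\|_1=1\le t$, which is the stated set. In the dual picture these three branches all correspond to $\mu^*=-1$, $\lambda^*=v_{\max}$, and to recovering the primal through \eqref{eq:PriSol1}.

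The main obstacle is reconciling the nonconvexity and genuine non-uniqueness of \eqref{prob.b1s2} with a duality certificate. Concretely, in cases (iii)--(v) the dual optimum lies on the boundary $\mu=-1$, where by Lemma~\ref{lemma.dualF}(iii) $g$ is finite only for $\lambda\ge v_{\max}$ and where \eqref{eq:dualF} no longer pins down a single $\bx^*$; one must both confirm that $(v_{\max},-1)$ is the \emph{global} dual maximizer and identify the full primal solution set, and the direct bound on $\bv^T\bx$ is the cleanest way to do both simultaneously. A secondary difficulty is the activeness argument in case (i)---a short perturbation along the sphere $\mathbb{S}_2\cap\mathbb{R}^n_+$ toward $\bv^+/\|\bv^+\|_2$ shows a slack $\ell_1$ constraint cannot be optimal---together with the bookkeeping for the measure-zero degeneracy $I_1=t^2$, where the root of $\phi$ is not unique but the projection still is.
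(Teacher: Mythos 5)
Your proposal is correct, and in the degenerate branches it takes a genuinely different route from the paper's. For cases (i) and (ii) you reproduce the paper's argument: the dual \eqref{eq:DualnonneBS}, the interior/boundary first-order conditions from Lemma~\ref{lemma.dualF} (with $\partial g/\partial\lambda\le 0$ at $\lambda^*=0$ in case (ii)), recovery of $\bx^*$ from \eqref{eq:PriSol2}, and the weak-duality certificate $g(\lambda^*,\mu^*)=\tfrac12\|\bx^*-\bv\|_2^2$ together with \eqref{eq:nonneBSCC}; your handling of the flat sub-case $I_1=t^2$ (where the root of $\phi$ is not unique but every root on $[\lambda_2,v_{\max})$ yields the same point $x_i=1/\sqrt{I_1}$ on $\mathcal{I}_1$) matches the paper's and is, if anything, more explicit about the resulting imprecision in the phrase ``unique root'' in statement (i). The divergence is in (iii)--(v): the paper stays inside the dual framework for (iii) and (iv) --- no stationary point of $g$ for $\mu>-1$ via Proposition~\ref{pro.psiroot}(i), dual optimum at $(\lambda_1,-1)$, primal solution set recovered from the degenerate Lagrangian minimizers \eqref{eq:PriSol1} --- and only in case (v), where that recovery produces the infeasible $\bx=0$ (an actual duality gap), does it switch to a direct linear argument, rewriting the objective as $-\bv^T\bx$ and minimizing $\mathbf{1}^T\bx$ over $\mathbb{S}_2\cap\R^n_+$. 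You instead run the direct bound $\bv^T\bx\le v_{\max}\|\bx\|_1$ uniformly across all three branches. This buys two things: the elementary inequality chain certifies \emph{global} optimality on the nonconvex sphere without any delicacy at the boundary $\mu=-1$, where $g=-\infty$ for $\lambda<v_{\max}$ and \eqref{eq:dualF} has multiple minimizers (a spot where the paper's own case (iii) is slightly garbled, asserting complementarity ``with $\lambda^*=0$ and $\mu^*=-1$'' after having set $\lambda^*=\lambda_1$, and citing \eqref{eq:DualnonneSS} where \eqref{eq:DualnonneBS} is meant); and your equality analysis characterizes the \emph{entire} solution set --- every optimum must have support in $\mathcal{I}_1$ with the stated norm conditions --- a converse inclusion the dual route delivers less transparently. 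Your existence step in (iii) is sound: over nonnegative unit vectors supported on $\mathcal{I}_1$, $\|\bx\|_1$ sweeps $[1,\sqrt{I_1}]$ and $1\le t<\sqrt{I_1}$. The alternative activeness argument for case (i) also goes through, with one point worth making explicit: moving along the normalized segment toward $\bv^+/\|\bv^+\|_2$ increases $\bv^T\bx$ because the numerator interpolates linearly while the denominator is at most one, and this requires $\bv^T\bx^*>0$, which is available since any coordinate vector $e_i$ with $i\in\mathcal{I}_1$ is feasible (as $t\ge 1$) and already gives value $v_{\max}>0$.
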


\begin{proof}  (i)  If $I_1 = t^2$, by Proposition \ref{pro.psiroot}(ii), $\phi(\lambda)=0$ for any $\lambda\in [\lambda_2, v_{\max})$.
If $I_1 > t^2$, notice that $\|\bv^+\|_1>t\|\bv^+\|_2$ is equivalent to  $\phi(0) >0$;  by Proposition \ref{pro.psiroot}(iii),
$\phi(\lambda) = 0$ has a unique solution $\lambda^*$ on $(0, v_{\max})$. In both cases, we let $\mu^* =\|(\bv-\lambda^*{\bf 1})^+\|_2 -1$. Then by Lemma \ref{lemma.dualF}(ii),  we have  $\nabla g(\lambda^*, \mu^*)=0$. So $(\lambda^*, \mu^*)$ is the optimal solution of \eqref{eq:DualnonneBS}. In this case, $\bx^*=(\bv -\lambda^*\mathbf{1})^+/\|(\bv -\lambda^*\mathbf{1})\|_2$ is the solution of \eqref{prob.b1s2} and is feasible for \eqref{prob.b1s2}. In addition, $(\lambda^*,\mu^*, \bx^*)$ satisfies \eqref{eq:nonneBSCC}. This proves   (i).

  (ii)   Same argument as for   (i) implies that    $\mu^* > -1$.
Since $\|\bv^+\|_1 \le t\|\bv^+\|_2$,  letting $(\lambda^*, \mu^*)$ with $\lambda^*= 0$ and $\mu^*=\|\bv^+\|_2-1$,
we have
\begin{equation}\label{eq:OptCforD2}
\begin{aligned}
&\ \frac{\partial g(\lambda^*, \mu^*)}{\partial \lambda}(\lambda-\lambda^*)+ \frac{\partial g(\lambda^*, \mu^*)}{\partial \mu} (\mu-\mu^*)\\
=  &\ ( \|\bv^+\|_1/ \|\bv^+\|_2 -t)  \lambda+ 0 (\mu-\mu^*)\\
\le &\  0
\end{aligned}
\end{equation}
for any $\lambda\ge 0, \mu\ge 0$. Therefore,  $(\lambda^*, \mu^*)$ is the optimal solution of \eqref{eq:DualnonneBS}.
For given $(\lambda^*, \mu^*)$, by \eqref{eq:PriSol2} the solution of \eqref{eq:dualF} is $\bx^*=\bv^+/\|\bv^+\|_2$. We can see  that $\bx^*$  is feasible
for  \eqref{prob.b1s2}. $(\lambda^*,\mu^*, \bx^*)$ obviously satisfies \eqref{eq:nonneBSCC}.
This proves   (ii).

  (iii)
Let $v_{\max}>0$ and $I_1>t^2$. By Proposition \ref{pro.psiroot}(i), $\phi(\lambda)=0$ does not have
root on $(-\infty, v_{\max})$.  By Lemma \ref{lemma.dualF}(ii),
$g(\lambda, \mu)$ has no stationary point in the region $\R\times (-1, \infty)$.
Therefore,  the optimal solution of \eqref{eq:DualnonneSS} is $\lambda^*=\lambda_1, \mu^*=-1$.
By Lemma \ref{lemma.dualF}(i),  any point $\bx$ satisfying \eqref{proj.MSnonneSS} must
be a solution of  \eqref{eq:dualF}, since it satisfies \eqref{eq:PriSol1}. Moreover, it satisfies the constraints of \eqref{prob.s1s2}. In addition, any $\bx$ satisfying \eqref{proj.MSnonneSS} with $\lambda^*=0$ and $\mu^*=-1$ also satisfies \eqref{eq:nonneBSCC}. This completes the proof of   (iii).


  (iv)
Let $\bv^+=0$ and $\lambda_1=0$.
It must be true that $\mu^* = -1$ since otherwise $\nabla g(\lambda, \mu) \neq 0$ for any $\lambda \ge 0$ and $\mu > -1$ by
  \eqref{eq:ParLamd} and \eqref{eq:ParMu}. This indicates by Lemma \ref{lemma.dualF}(i) that $\lambda^*=v_{\max}=0$.
  For   $(\lambda^*, \mu^*) = (0, -1)$, by \eqref{eq:PriSol1} the solution of \eqref{proj.MSnonneBS1} solves \eqref{eq:dualF} and those $\bx$ are feasible for \eqref{prob.b1s2}. Therefore, any $\bx$ satisfying  \eqref{proj.MSnonneBS1}  must be optimal for \eqref{prob.b1s2}.  This proves  (iv).

  (v)
Let $\bv^+=0$ and $\lambda_1<0$.
Same argument applied to the proof of   (iv) implies that
the optimal solution of the dual   \eqref{eq:DualnonneBS} is $(\lambda^*,\mu^*)=(0,-1)$.
However,    by \eqref{eq:PriSol1} the solution of \eqref{eq:dualF} is $\bx=0$ which is infeasible for \eqref{prob.b1s2}.
Therefore, we   further investigate the  primal optimal solution 
and rewrite \eqref{prob.b1s2}  as
\begin{equation}\label{eq:LinMaxNonneBS}
\mathop{\rm minimize}   -\bv^T\bx  \quad
\st   \ \ \bx\in\mathbb{R}^n_+\cap \mathbb{B}_1^t \cap \mathbb{S}_2.
\end{equation}
Since $-\bv^T\bx\ge (-\lambda_1) \boldsymbol{1}^T\bx$ for all $\bx\in\mathbb{R}_+^n$ and equality holds if and only if $x_i=0$ for $i\notin \mathcal{I}_1$ since $-\lambda_1>0$.
Hence, we can instead consider minimize $\boldsymbol{1}^T\bx$  and obtain
\begin{equation}
\label{prob.tmp.2ball}
\mathop{\rm minimize} \boldsymbol{1}^T\bx \quad \ \ \st\quad   \bx\in\mathbb{R}^n_+  \cap \mathbb{S}_2.
 \end{equation}
Notice the constraint $\sum_{i=1}^n x_i \le t$ is eliminated since  $\sum_{i=1}^n x_i$ is minimized in the objective;
this constraint must be satisfied at the optimal solution of  \eqref{prob.tmp.2ball} since otherwise \eqref{eq:LinMaxNonneBS}
is infeasible. The optimal solution of \eqref{prob.tmp.2ball} is obviously any $\bx$
with exactly one nonzero component, i.e., $x_i = 1$ for some $i\in\{1,\ldots,n\}$, and such
points are naturally  feasible for $\sum_{i=1}^n x_i \le t$ due to the assumption $t > 1$.
On the other hand, if we further require $i\in\Ical_1$, then
$-\bv^T\bx =   -\lambda_1 \boldsymbol{1}^T\bx$, meaning
$-\bv^T\bx$ attains the minimum of $ -\lambda_1 \boldsymbol{1}^T\bx$ on the same feasible region.
Therefore, any $\bx$ with only one nonzero component  $x_i = 1$ and $i\in \Ical_1$ is optimal for \eqref{prob.b1s2}.
Case (iv) is also true.
\end{proof}

\section{Proposed  Algorithms for Solving   $\phi=0$}\label{algorithm}\label{sec.algorithm}

 Based on Theorems \ref{thm.nonnega}, \ref{thm.nonneSS} and \ref{thm.nonneBS}, 
 the projections onto  $\mathbb{B}_1^t \cap \mathbb{B}_2$,    $\mathbb{S}_1^t\cap \mathbb{S}_2$ and   $\mathbb{B}_1^t \cap \mathbb{S}_2$,
as well as the problem of maximizing a linear function over the $\mathbb{B}_1^t \cap \mathbb{B}_2$ can all revert to
solving the equation of $\phi(\lambda) = 0$. If we can design an algorithm to quickly  find the root, then
all the problems above can be solved efficiently.  Instead of presenting algorithms for
solving each of the projection problems, in this section,  we  focus on designing algorithms for
computing the root of $\phi(\lambda)=0$.
Once we have the root of $\phi(\lambda)=0$, we can easily obtain the  optimal solution.

\subsection{Breakpoint Search with Sorting}

In this subsection, we focus on designing algorithms for solving the nonsmooth equation  $\phi(\lambda)=0$
by assuming   $\bv$ is already sorted in descending order.
From Proposition \ref{lem.psi}, we know $\phi$ is piecewisely quadratic with breakpoints $\lambda_1, ..., \lambda_k$.
Therefore, $\phi$ is  quadratic on each interval $[\lambda_{j-1}, \lambda_j]$, $j=2, \ldots, k$.  If we know the index $j^*$ with  $\phi(v_{j^* +1})\ge 0$ and $\phi(v_{j^*})<0$, then solving $\phi=0$ on $(a, b)$ reverts to solving a quadratic equation with $\lambda$ on   $[v_{j^*+1}, v_{j^*   })$, which possesses  a close form of the root.  This motivates us to find such breakpoints $v_{j^*+1}$ and $v_{j^*}$, or equivalently, the index $j^*$.  Once this interval is determined,   from  \eqref{eq:QuadRoot}, we know $\phi(\lambda)$ has a  root $\lambda^*$ on $(v_{j^*+1}, v_{j^*})$:
\begin{equation}\label{rootPsi}
  \lambda^* = \frac{1}{j^*}\left(s  - t\sqrt{\tfrac{j^*w - s^2}{j^*-t^2}}\right).
\end{equation}

Notice that  the projection is often  sparse in the  applications of interests.
We propose a Forward Searching (FS) method to search for $j^*$, which  checks the function value at each breakpoint in the order of $v_1, \ldots, v_n$ to determine the first index $j$ satisfying  $\phi(v_{j+1}) \ge 0$.  One may also consider other searching strategies, e.g.,
backward searching in the order of $v_n, \ldots, v_1$.

The major computational cost besides sorting in this method is spent on the function evaluations of $\phi$, which takes  $O(n)$ operations per each    evaluation. Therefore, it can further reduce the computational cost by updating the function values  in the recursive way in our FS method. A complete statement of this method is provided in Algorithm~\ref{sub.alg.fs}.

\begin{algorithm}[t]
\caption{Forward searching (FS) method for solving $\phi(\lambda)=0$.} \label{sub.alg.fs}
\begin{algorithmic}[1]
\STATE Input a vector $\bv\in \R^n$  and  $t \in (1, \sqrt{n})$
\STATE Sort $\bv$ in decending order.
\STATE Let $j_t=\lceil t^2\rceil$ and compute $s = \sum_{i=1}^{j_t}v_i$, $w =\sum_{i=1}^{j_t}v_i^2$.
\FOR{$j=j_t,...,n-1$}
    \STATE $s  = s  + v_{j+1}, w   =  w + v_{j+1}^2$;
    \IF{$v_{j+1}<v_j$}
        \STATE  $\phi =\  ((j + 1) - t^2)((j+1)v_{j+1}-2s v_{j+1})v_{j+1} + s^2-t^2 w$;
        \IF{$\phi \ge 0 $}
            \STATE $ j^*  = j$; $s  = s  - v_{j+1}, w   =  w - v_{j+1}^2$;
            \RETURN
        \ENDIF
    \ENDIF
\ENDFOR
\STATE Output $\lambda^* = \frac{1}{j^*} \left(s  - t\sqrt{\frac{j^* w-s^2}{\hat j-t^2}} \right)$.
\end{algorithmic}
\end{algorithm}


%
%

\subsection{Improved Bisection Methods}

The FS method  should be witnessed   complexity of $O(n\log n)$ at best   due to the presence of sorting.  In this subsection, we design a breakpoint search method without sorting, so that in practice  faster speed could be witnessed.

The efficiency of the our iterative root-finding algorithms mainly depends on two factors: the computational cost of
the function evaluation, and the total number of iterations. Next we discuss techniques to reduce the computational
efforts caused by these two factors.

%
%


For efficiently evaluating $\phi$,   \eqref{eq:psi} and \eqref{eq:Dpsi} imply that  the calculation of  $\phi(\lambda)$ and the first-order derivative information
depends on the calculation of $I_\lambda, s_\lambda$ and $q_\lambda$.
Notice that the first-order derivative of $\phi$ at $\lambda$ could be the gradient if $\phi$ is differentiable at $\lambda$ or
a subgradient  of $\phi$ at $\lambda$ if $\phi$ is nondifferentiable, since $\phi$ is convex on $(-\infty, \lambda_{j_t}]$ by Proposition \ref{lem.psi} (v).

%
%

Let the  interval that we are working on  be $[l, r]$ with $\phi(l)>0$ and $\phi(r)<0$, and $\phi(r)$ and $\phi'(r)$ have been computed. The set $\Ical_r$ and the respective scales $m_r, s_r, q_r$ have been recorded. We now show how to evaluate the set $\Ical_\lambda$ and 
 $I_\lambda, s_\lambda, q_\lambda$ for any $\lambda\in [l,r)$. Denote $\Ucal_\lambda=\{i: \lambda\le v_i\le r, i=1,\ldots,n\}$. Then we have
$\Ical_\lambda=\Ical_r\cup \Ucal_\lambda$, so that
\begin{equation}\label{eq:updatepsi}
 I_\lambda = m_r + |\Ucal_\lambda|,\ s_\lambda = s_r + \sum_{i\in \Ucal_\lambda}v_i,\  w_\lambda = w_r + \sum_{i\in \Ucal_\lambda}v_i^2,
\end{equation}
implying  that we can simply focus on those elements of $\bv$  on $[\lambda, r]$  for evaluation $\phi$ and the first-order derivative of  $\phi$. Note that the number of elements in $\bv$ in the interval $[\lambda, r]$ decreases as the algorithm proceeds,  thus the computational cost  needed per iteration decreases as well.


{\bf Bounding the Root}. While solving an equation,
it could be  extremely helpful if one can find a relatively  accurate estimate of the range (interval) containing the root. This can significantly reduce the
number of iterations for many algorithms such as bisection method, secant method or even (nonsmooth) Newton method.
It has been shown that $\phi(\lambda)=0$ must have a unique root in $(0,v_{\max})$ if $\phi(0)>0$. The next proposition further narrows down the interval containing $\lambda^*$.

 \begin{proposition}\label{prop.bound root}
 For $\bv\in\mathbb{R}^n_+$,  suppose
 $
\|\bv\|_1>t, \
\|\bv\|_1 > t\|\bv\|_2  \  \text{ and }\  \|(\bv-\hat\lambda\mathbf{1})^+\|_2 > 1,
$
with $\hat\lambda$ being the root of $\psi(\lambda)=0$ on $\R$, $\lambda^*$ be the root $\phi(\lambda)=0$ on $(0, v_{\max})$, and
$\tilde \lambda: =(\|\bv\|_1-t\|\bv\|_2)/n > 0$.  Then $\phi(\tilde\lambda) >0$ and $\phi(\hat\lambda)<0$.
\end{proposition}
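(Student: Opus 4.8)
The plan is to establish the two inequalities separately, since $\phi(\hat\lambda)<0$ is essentially immediate from the definition of $\hat\lambda$, whereas $\phi(\tilde\lambda)>0$ is where the real work lies. For the first, I would use that $\hat\lambda$ solves $\psi(\lambda)=0$, which by definition of $\psi$ means $\|(\bv-\hat\lambda\mathbf{1})^+\|_1=t$. Substituting this directly into the definition of $\phi$ collapses the first term, giving $\phi(\hat\lambda)=t^2-t^2\|(\bv-\hat\lambda\mathbf{1})^+\|_2^2=t^2\big(1-\|(\bv-\hat\lambda\mathbf{1})^+\|_2^2\big)$. The hypothesis $\|(\bv-\hat\lambda\mathbf{1})^+\|_2>1$ then forces this to be strictly negative. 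This is exactly the computation already carried out in the proof of Theorem~\ref{thm.nonnega}, Case (iii)-b, so it can be invoked essentially verbatim.

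For $\phi(\tilde\lambda)>0$, the key observation I would exploit is that $\tilde\lambda$ is chosen precisely so that the \emph{full} residual sum is positive: since $n\tilde\lambda=\|\bv\|_1-t\|\bv\|_2$, we get $\sum_{i=1}^n(v_i-\tilde\lambda)=\|\bv\|_1-n\tilde\lambda=t\|\bv\|_2>0$ (here $\|\bv\|_2>0$, as $\bv=0$ would contradict $\|\bv\|_1>t$). This positivity is what lets me compare the positive-part quantities with their full-sum analogues without sign trouble. Concretely, I would bound the $\ell_1$ term from below by dropping the truncation, $\|(\bv-\tilde\lambda\mathbf{1})^+\|_1=\sum_i\max(v_i-\tilde\lambda,0)\ge\sum_i(v_i-\tilde\lambda)=t\|\bv\|_2\ge 0$, so that squaring preserves the inequality; and I would bound the $\ell_2$ term from above, $\|(\bv-\tilde\lambda\mathbf{1})^+\|_2^2=\sum_{v_i\ge\tilde\lambda}(v_i-\tilde\lambda)^2\le\sum_i(v_i-\tilde\lambda)^2$, since truncating removes nonnegative terms.

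Combining these two estimates yields
\[
\phi(\tilde\lambda)\ \ge\ (t\|\bv\|_2)^2-t^2\sum_{i=1}^n(v_i-\tilde\lambda)^2
=(t\|\bv\|_2)^2-t^2\big(\|\bv\|_2^2-2\tilde\lambda\|\bv\|_1+n\tilde\lambda^2\big),
\]
which simplifies, after cancelling $t^2\|\bv\|_2^2$ and using $n\tilde\lambda=\|\bv\|_1-t\|\bv\|_2$, to $t^2\tilde\lambda\big(2\|\bv\|_1-n\tilde\lambda\big)=t^2\tilde\lambda\big(\|\bv\|_1+t\|\bv\|_2\big)$. Since $\tilde\lambda>0$ and $\|\bv\|_1+t\|\bv\|_2>0$, this is strictly positive, completing the argument.

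The main obstacle is the positive-part truncation in the $\ell_1$ term: naively replacing $\|(\bv-\tilde\lambda\mathbf{1})^+\|_1$ by the full sum and then squaring could \emph{reverse} the inequality if that full sum were negative. The linchpin that avoids this pitfall is the explicit identity $\sum_i(v_i-\tilde\lambda)=t\|\bv\|_2>0$, which holds exactly because of the prescribed value of $\tilde\lambda$; everything else is a routine expansion. I would therefore foreground this sign verification, since it is the only step that is not mechanical.
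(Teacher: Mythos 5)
Your proposal is correct, but for the substantive half, $\phi(\tilde\lambda)>0$, you take a genuinely different route from the paper. The paper never evaluates $\phi(\tilde\lambda)$ directly: it proves $\tilde\lambda<\lambda^*$ by comparing $\psi$-values, namely $\psi(\tilde\lambda)\ge\sum_{i=1}^n(v_i-\tilde\lambda)-t=t(\|\bv\|_2-1)$ while $\psi(\lambda^*)=t\big(\|(\bv-\lambda^*\mathbf{1})^+\|_2-1\big)<t(\|\bv\|_2-1)$, and then transfers the conclusion to $\phi$ via the strict monotonicity of $\psi$ (Proposition \ref{lem.phi}) together with the fact that $\phi$ is strictly decreasing on $(0,\lambda^*]$ with $\phi(\lambda^*)=0$ (Proposition \ref{lem.psi}(iv)). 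Your argument is instead purely algebraic: the identity $n\tilde\lambda=\|\bv\|_1-t\|\bv\|_2$ gives $\sum_{i=1}^n(v_i-\tilde\lambda)=t\|\bv\|_2\ge 0$, which is exactly the sign condition that legitimizes dropping the truncation in the $\ell_1$ term before squaring, while discarding the truncated squares bounds the $\ell_2$ term from above; expanding yields the explicit estimate
\[
\phi(\tilde\lambda)\ \ge\ t^2\tilde\lambda\big(2\|\bv\|_1-n\tilde\lambda\big)\ =\ t^2\tilde\lambda\big(\|\bv\|_1+t\|\bv\|_2\big)\ >\ 0,
\]
and your expansion checks out (note also that $\|(\bv-\tilde\lambda\mathbf{1})^+\|_1\ge t\|\bv\|_2>0$ automatically forces $\tilde\lambda<v_{\max}$, so no separate domain check is needed). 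What each approach buys: yours is self-contained and more elementary --- it requires neither the existence and uniqueness of the root $\lambda^*$ (Theorem \ref{thm.nonnega}, Proposition \ref{pro.psiroot}) nor the piecewise monotonicity/convexity machinery for $\phi$, and it even produces a quantitative lower bound on $\phi(\tilde\lambda)$ rather than bare positivity. The paper's route, in exchange, directly establishes the ordering $\tilde\lambda<\lambda^*<\hat\lambda$, locating the root strictly inside the bracketing interval (though given uniqueness of the root and continuity, the two formulations are equivalent in this context). Your treatment of $\phi(\hat\lambda)<0$ coincides with the paper's, which likewise reduces to the computation $\phi(\hat\lambda)=t^2\big(1-\|(\bv-\hat\lambda\mathbf{1})^+\|_2^2\big)<0$ from case (iii)-b of Theorem \ref{thm.nonnega}.
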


\begin{proof}
 First of all, by Theorem \ref{thm.nonnega}, we have $\phi(\hat\lambda)<0$ and $\hat\lambda\in (0, v_{\max})$.

Since $\phi$ is strictly decreasing on $(0,\lambda^*]$ by Proposition \ref{lem.psi} (iv), we only have to prove that $\tilde\lambda<\lambda^*$ since $\phi(\lambda^*)=0$.
By the definition of $\psi$ it holds true that
\begin{equation}\label{phi in lam}
\psi(\tilde\lambda) \ge \sum_{i=1}^n(v_i-\tilde\lambda)-t = t(\|\bv\|_2 - 1)
\end{equation}
where the equality comes from the definition of $\tilde\lambda$. Moreover, by $\hat\lambda>0$ and the definition of $\lambda^*$, $\psi$ and $\phi$,
 \[ \psi(\lambda^*) =\sqrt{ \phi(\lambda^*)+ t^2 \|(\bv-\hat\lambda\bm 1)^+\|_2^2 } - t = t\left(\|(\bv-\hat\lambda\bm 1)^+\|_2 -1 \right)  < t(\|\bv\|_2- 1),\]
 which, combined with \eqref{phi in lam}, yields $\psi(\tilde\lambda) > \psi(\lambda^*)$.  It follows that $\tilde\lambda < \lambda^*$ since $\psi$ is decreasing on $(0,\hat\lambda)$ by Proposition \ref{lem.phi}, completing the proof.
\end{proof}

 Proposition \ref{prop.bound root}  can be used to initialize Algorithm \ref{sub.alg.no sort1} and \ref{sub.alg.no sort2}.
 In Algorithm \ref{sub.alg.no sort1} and \ref{sub.alg.no sort2} where $\bv$ is not sorted, we can set $l= \tilde\lambda\ \text{ and }\  r  = \hat\lambda$
 to further alleviate the computational effort.

{\bf Reducing the number of iterations.} Suppose we are currently working on two endpoints $l$ and $r$ with
$\phi(l)>0$ and $\phi(r)<0$.  We first derive a tighter upper bound for the root than $r$.
Consider the line passing through   points $(l, \phi(l))$ and $(r, \phi(r))$:
\[S(\lambda) = \phi(r) + \frac{\phi(l)-\phi(r)}{l-r}(\lambda-r)  \ \text{ with root }\
\lambda_S= r-\phi(r)\frac{l-r}{\phi(l)-\phi(r)}.
\]
Notice that $S(l)=\phi(l)>0$ and $S(r)=\phi(r)<0$, implying
  $l<\lambda_S<r$. 
  Therefore, if we can show
$\phi(\lambda_S)<0$, then $\lambda_S$ is a tighter upper bound for the root than $r$.

If $r\le \lambda_{j_t}$, by Proposition \ref{lem.psi} (iv),
$\phi$ is convex on $(l,r)$.  Since
$S(l)=\phi(l)$ and $S(r)=\phi(r)$, the line $S(\lambda)$ is above the figure of $\phi$ on $[l,r]$, then it holds that $\phi(\lambda_S)<S(\lambda_S)= 0$.

If $   r> \lambda_{j_t}$, consider the line $\bar S(\lambda)$ passing through  points $(l, \phi(l))$ and $(\lambda_{j_t}, \phi(\lambda_{j_t}))$
\[\bar S(\lambda) = \phi(\lambda_{j_t}) + \frac{\phi(\lambda_{l})-\phi(j_t)}{l-\lambda_{j_t}}(\lambda-\lambda_{j_t})\]
 and denote the
 root of this line
as $\bar \lambda$. Notice that $\phi(\lambda_{j_t})<0$ by the definition of $j_t$. Following the same argument
for $S(\lambda)$, it holds that $\phi(\bar\lambda)<0$. On the other hand, $l-r < l -\lambda_{j_t}$ and
$\phi(l) - \phi(r) < \phi(l)-\phi(\lambda_{j_t})$ since $\phi$ is monotonically increasing on $[\lambda_{j_t}, v_{\max})$ by Proposition \ref{lem.psi} (iv).
Therefore,  $\bar S'(\lambda) < S'(\lambda) < 0$, combined with  $S(l)=\bar S(l) > 0$, implying $\lambda_S > \bar\lambda$.
Hence, $\phi(\lambda_S)<0$ by Proposition \ref{lem.psi} (iv).
%


Next, we derive a tighter lower bound for the root than $l$.
Consider the line passing through $(l, \phi(l))$ with derivative $\varphi'_j(l)$, which by Proposition \ref{lem.psi} (v) can be written as
\[  T(\lambda) = \phi(l) + \phi'(l)(\lambda - l).\]
Based on the definition of $\varphi'_j$ in \eqref{eq:Dpsi} and $j\ge j_t$ by $\phi(l)>0$, we have $\phi'(l)=\varphi'_j(l)<0$. Therefore the unique root of $T$ is
\begin{equation}\label{eq:Tangent}
\lambda_T = l- \phi(l)/\phi'(l) > l.
\end{equation}
We only have to show that $\phi(\lambda_T)>0$. To see this, notice that $\phi$ is convex on $(-\infty, \lambda_{j_t})$ and $\varphi'_j(l) \in \partial \phi(l)$ by Proposition \ref{lem.psi} (v), the line $T(\lambda)$
always underestimate $\phi$, implying $\phi(\lambda_T)> T(\lambda_T)=0$.


We finally consider $\phi(\cdot)$ on the interval $(\lambda_{j+1}, \lambda_j)$ containing $l$.
  It follows that on this interval $\phi$ is equal to the quadratic function
\[\varphi_j(\lambda) = (m_l-t^2)m_l\lambda^2 -2(m_l-t^2)s_l\lambda + s_l^2 - t^2w_l.\]
Here $j\ge j_t$ since $\phi(l)> 0$. Denote the smaller root of $\varphi_j$ as $\lambda_Q$, then by  \eqref{eq:QuadRoot},
\begin{equation}\label{eq:Quad}
\lambda_Q = \frac{1}{m_l}\left( s_l  - t\sqrt{\frac{m_l w_l-s_l^2}{m_l-t^2}} \right),
\end{equation}
which is shown $\phi(\lambda_Q)\ge 0$ in the following lemma.
\begin{proposition}\label{prop.QuadApprax}
If  no $v_i\in (l, \lambda_Q)$, then $\phi(\lambda_Q)=0$; otherwise $\phi(\lambda_Q)>0$.
\end{proposition}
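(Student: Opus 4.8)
The plan is to determine $\phi(\lambda_Q)$ by locating $\lambda_Q$ relative to the breakpoint $\lambda_j$ and then reading off the value from the piecewise formula \eqref{eq:psiPro3} together with the envelope representation \eqref{eq:MaxQuadFam}. First I would record two facts. By its definition \eqref{eq:QuadRoot}, $\lambda_Q$ is the smaller root of the quadratic piece $\varphi_j$ that is active at $l$, so $\varphi_j(\lambda_Q)=0$. Moreover $\varphi_j(l)=\phi(l)>0$ while $\varphi_j'(l)<0$ and $\varphi_j$ is convex (Lemma \ref{lem.QuadFam}(ii), since $j\ge j_t$); hence $l$ lies on the decreasing branch of $\varphi_j$, strictly to the left of its smaller root, giving $l<\lambda_Q$. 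Since $\lambda_j$ is the only breakpoint immediately to the right of $l$, the hypothesis that no $v_i$ lies in $(l,\lambda_Q)$ is equivalent to $\lambda_Q\le\lambda_j$, and its negation to $\lambda_Q>\lambda_j$. The statement thus splits into these two cases.

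In the first case $\lambda_Q\in(l,\lambda_j]\subseteq(\lambda_{j+1},\lambda_j]$, so \eqref{eq:psiPro3} gives $\phi(\lambda_Q)=\varphi_j(\lambda_Q)=0$, which is the first assertion. In the second case $\lambda_Q>\lambda_j$, and the plan is to exploit the envelope bound $\phi\ge\varphi_j$ on $(-\infty,\lambda_{j_t}]$ that follows from \eqref{eq:MaxQuadFam} (as $j\in\{j_t,\dots,k\}$). Evaluating this bound at the unique root $\lambda^*<\lambda_{j_t}$ of $\phi$ (Proposition \ref{pro.psiroot}(iii)) yields $\varphi_j(\lambda^*)\le\phi(\lambda^*)=0$; since $\varphi_j$ is convex with smaller root $\lambda_Q$, this places $\lambda^*$ between the two roots of $\varphi_j$ and hence $\lambda_Q\le\lambda^*<\lambda_{j_t}$. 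Consequently $\lambda_Q$ lies in the domain of the envelope representation but strictly to the right of the active interval $[\lambda_{j+1},\lambda_j]$ of $\varphi_j$, and the envelope bound already gives $\phi(\lambda_Q)\ge\varphi_j(\lambda_Q)=0$.

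The hard part will be upgrading this to the strict inequality $\phi(\lambda_Q)>0$, since $\phi-\varphi_j$ is a difference of convex quadratics and need not itself be convex. My plan is to let $j'$ be the index with $\lambda_Q\in(\lambda_{j'+1},\lambda_{j'}]$, so that $j_t\le j'<j$ and $\phi(\lambda_Q)=\varphi_{j'}(\lambda_Q)$, and then to compare $\varphi_{j'}$ and $\varphi_j$ directly through $g:=\varphi_{j'}-\varphi_j$. This $g$ is a concave (downward) parabola, because its leading coefficient $(I_{j'}-t^2)I_{j'}-(I_j-t^2)I_j$ is negative ($x\mapsto x(x-t^2)$ is increasing for $x\ge t^2$ and $I_{j'}<I_j$), and the envelope property forces $g\le 0$ on $[\lambda_{j+1},\lambda_j]$ and $g\ge 0$ on $[\lambda_{j'+1},\lambda_{j'}]$. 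A nonconstant concave function that is nonnegative on an interval cannot vanish in its interior, so $g(\lambda_Q)>0$ and therefore $\phi(\lambda_Q)=\varphi_{j'}(\lambda_Q)>\varphi_j(\lambda_Q)=0$, unless $\lambda_Q$ is an endpoint of $[\lambda_{j'+1},\lambda_{j'}]$, i.e.\ unless $\lambda_Q$ coincides with a breakpoint at which $\phi$ already vanishes. Excluding this degenerate coincidence (for instance by invoking the strict ordering $\varphi_i>\varphi_{i-1}$ of Lemma \ref{lem.QuadFam}(iii) along $i=j'+1,\dots,j$, or by a genericity argument on $\bv$) is the crux of the proof; once it is dispatched the dichotomy follows.
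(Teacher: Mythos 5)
Your case split, the localization $l<\lambda_Q\le\lambda^*<\lambda_{j_t}$, and the envelope bound $\phi(\lambda_Q)\ge\varphi_j(\lambda_Q)=0$ are all correct, and in fact more careful than the paper, whose entire proof is two lines: case one is read off \eqref{eq:psiPro3} exactly as you do, and the strict inequality in case two is simply asserted from the max representation \eqref{eq:MaxQuadFam} of Proposition~\ref{lem.psi}(v), which as stated yields only ``$\ge$''. So your instinct that strictness needs a real argument is sound; the problem is that you leave that argument open. You reduce to excluding the coincidence $\varphi_{j'}(\lambda_Q)=\varphi_j(\lambda_Q)=0$ with $\lambda_Q$ at the breakpoint $\lambda_{j'}$, call it the crux, and offer two fixes, neither of which works. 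Chaining Lemma~\ref{lem.QuadFam}(iii) is inapplicable: it gives $\varphi_i>\varphi_{i-1}$ only for $\lambda<\lambda_i$, while every $i$ in your chain $j'+1,\ldots,j$ has $\lambda_i\le\lambda_{j'+1}<\lambda_Q$, so the lemma says nothing at $\lambda_Q$. A genericity argument on $\bv$ cannot prove a statement asserted for every $\bv$. Worse, pure sign analysis of your concave parabola $g=\varphi_{j'}-\varphi_j$ cannot exclude the bad case at all: $g\le0$ on $[\lambda_{j+1},\lambda_j]$, $g\ge0$ on $[\lambda_{j'+1},\lambda_{j'}]$, and $g(\lambda_{j'})=0$ are simultaneously consistent, with $\lambda_{j'}$ the larger root of $g$. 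Closing the gap requires using that $\lambda_Q$ is specifically the \emph{smaller} root of $\varphi_j$, which your argument never exploits.

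Here is the missing step. Write $\varphi_i(\lambda)=A_i(\lambda)^2-t^2B_i(\lambda)$ with $A_i(\lambda)=\sum_{p\in\Ical_i}(v_p-\lambda)$ and $B_i(\lambda)=\sum_{p\in\Ical_i}(v_p-\lambda)^2$, so that $\varphi_j'(\lambda)=-2(I_j-t^2)A_j(\lambda)$. Since $\varphi_j(l)=\phi(l)>0$ forces $I_j>t^2$ (if $I_j=t^2$ then $\varphi_j$ is the nonpositive constant of \eqref{eq:psicons}), $\varphi_j$ is a convex parabola, and at its smaller root $\varphi_j'(\lambda_Q)\le0$, i.e.\ $A_j(\lambda_Q)\ge0$. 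Now suppose $\varphi_{j'}(\lambda_Q)=\varphi_j(\lambda_Q)=0$, and set $c:=A_j(\lambda_Q)-A_{j'}(\lambda_Q)=\sum_{p\in\Ical_j\setminus\Ical_{j'}}(v_p-\lambda_Q)<0$ and $e:=B_j(\lambda_Q)-B_{j'}(\lambda_Q)>0$, which hold because each such $v_p\le\lambda_{j'+1}<\lambda_Q$. Subtracting the two equations gives $c\,(2A_{j'}(\lambda_Q)+c)=t^2e>0$, and $c<0$ then forces $2A_{j'}(\lambda_Q)+c<0$, hence $A_j(\lambda_Q)<-A_{j'}(\lambda_Q)\le0$, since $A_{j'}(\lambda_Q)\ge0$ ($v_p\ge\lambda_{j'}\ge\lambda_Q$ for $p\in\Ical_{j'}$). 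This contradicts $A_j(\lambda_Q)\ge0$, so the coincidence is impossible and $\phi(\lambda_Q)=\varphi_{j'}(\lambda_Q)>\varphi_j(\lambda_Q)=0$. Note that this computation does not care whether $\lambda_Q$ is interior to $(\lambda_{j'+1},\lambda_{j'}]$ or sits at the endpoint, so you could replace your concavity discussion with it entirely; it is also the justification the paper itself elides when it reads the strict inequality directly off \eqref{eq:MaxQuadFam}.
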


\begin{proof}
 If there no $v_i\in (l, \lambda_Q)$, then $\lambda_Q\in (\lambda_{j+1}, \lambda_j)$. So $\phi(\lambda_Q)=\varphi_j(\lambda_Q)=0$. Otherwise, Since $j\ge j_t$, so by Proposition \ref{lem.psi} (v), it holds that $\phi(\lambda_Q)>\varphi_j(\lambda_Q)=0$.
\end{proof}

Based on the discussion above, we design two methods.  One is named as the
Semi-Smooth Newton Secant Bisection method (SSNSB). The other is Quadratic Approximation Secant Bisection method (QASB). They are stated in Algorithm \ref{sub.alg.no sort1} and \ref{sub.alg.no sort2} respectivly.

\begin{algorithm}[t]
\caption{Semi-Smooth Newton Secant Bisection (SSNSB) Method for $\phi(\lambda)=0$.} \label{sub.alg.no sort1}
\begin{algorithmic}[1]
\STATE Input a vector $\bv\in \R^n$ and $t \in (1, \sqrt{n})$ and tolerance $\delta$.
\STATE Given $r$ and $l$ satisfying $\phi(l)>0$ and $\phi(r)<0$.

\WHILE{ $r-l > \delta$ and $|\phi(\lambda)|>\delta$}
    \STATE Compute $\lambda_S$,  $\phi(\lambda_S)$,  $\lambda_T$, $\phi(\lambda_T)$, $\lambda = (\lambda_S+\lambda_T)/2$, $\phi(\lambda)$.
           \IF{$\phi(\lambda)>0$}
            \STATE $r=\lambda_S$, $l=\lambda_T$.
              \ELSE
            \STATE $l=\lambda$, $r=\lambda_T$.
            \ENDIF
 \ENDWHILE
\STATE Output $\lambda$.
\end{algorithmic}
\end{algorithm}

\begin{algorithm}[t]
\caption{Quadratic Approximation Secant Bisection (QASB) Method for $\phi(\lambda)=0$.} \label{sub.alg.no sort2}
\begin{algorithmic}[1]
\STATE Input a vector $\bv\in \R^n$ and $t \in (1, \sqrt{n})$ and tolerance $\delta$.
\STATE Given  $r$ and $l$ satisfying $\phi(l)>0$ and $\phi(r)<0$.

\WHILE{ $r-l > \delta$ and $|\phi(\lambda)| > \delta$}
    \STATE Compute $\lambda_S$, $\phi(\lambda_S)$, $\lambda_Q$, $\phi(\lambda_Q)$, $\lambda = (\lambda_S+\lambda_Q)/2$, $\phi(\lambda)$.
        \IF{there is no $v_i$ in $(l, \lambda_Q)$}
            \STATE $\lambda =\lambda_Q$.
            \RETURN
        \ENDIF
         \IF{$\phi(\lambda)>0$}
            \STATE $r=\lambda_S$, $l=\lambda_Q$.
         \ELSE
            \STATE $l=\lambda$, $r=\lambda_Q$.
         \ENDIF
\ENDWHILE
\STATE Output $\lambda$.
\end{algorithmic}
\end{algorithm}

\section{Numerical experiments}\label{sec.experiment}

In this section, we test the performance of our proposed methods. The experiments consist of two parts.
The first part is testing  SSNSB and QASB with contemporary methods for computing the projection of a given $\bv$
onto $\B_1^{\text t}\cap\B_2$.  The main computational efforts focus on solving $\phi(\lambda)=0$,
corresponding to the case where $\bv$ lies in region $C_{\text{III}}$ in Table 1 (the hard case).
The second part is to test  SSNSB and QASB on  solving $\phi(\lambda)=0$ for computing the  projection onto $\mathbb{S}_1^{\text t}\cap\mathbb{S}_2$.

%

 In all experiments,  the tolerance is set as $\delta=10^{-9}$.
The codes are implemented  in C code, and runs on an HP laptop with a 1.80GHz Intel Core, i7-8565U CPU and 16.0GB RAM for Windows 10.
In the table of results,  each number is the average of 100 runs with fastest being the numbers in bold   and the standard deviation in the parenthesis.

\subsection{Projection onto $\B_1^{\text t}\cap\B_2$}\label{sec.bbtest}

For fixed dimension $n$, we randomly generate the input vector $\bm{v}\in\R^n$, and keep those that  fall
in $C_{\text{III}}$ and  discard those that are not in $C_{\text{III}}$. We only  compare
  the efficiency of FS, BM (the traditional Bisection method), SSNSB and QASB.
Let $\sigma=(\sqrt{n}-t)/(\sqrt{n}-1)$, where $\sigma$ measures sparseness as suggested  by Hoyer \cite{Hoyer04}. We set $\sigma$ for 0.9 as
used by \cite{Thom15}, and
setting $t$ according to the values of  $\sigma$ and $n$.
We consider the following
 three types of  approaches to generate the data, which is motivated  in the test of \cite{Dimitris16} and
  represent the components in $\bv$ have  different mean values.

%
%

\begin{itemize}
\item

Type I:    $v_1,\cdots,v_n$ are i.i.d random Guassian numbers with mean $0$ and standard deviation  1.

\item

Type II:  7/8 components of $\bm{v}$ are i.i.d random Guassian numbers of mean $0$ and standard deviation  $0.2$,  the rest are i.i.d random Guassian numbers of mean $0.9$ and standard deviation $0.2$.

\item

Type III:   Components of $\bm{v}$ are randomly equally partitioned with i.i.d random Guassian numbers of mean $0.1$, $0.4$, $0.7$, $1.0$ respectively and standard deviation $0.2$.

\end{itemize}
%
%
%
%


The initial interval for SSNSB and QASB are chosen as ($\tilde\lambda$, $\hat\lambda$). As for BM,
we test initial interval with ($0$, $v_{\max}$)and ($\tilde\lambda$, $\hat\lambda$)  to show the benefits brought by  our proposed initial interval.

\textbf{An Illustrative Example}
We generate  $\bm{v}$ of Type I with $n=100$ to illustrate the behavior of QASB.
Table \ref{An Illustrative Example} shows the iteration information of BM, SSNSB and QASB over $k$.
We also record  $|\Ucal|$ of those algorithms during the iteration  in Table \ref{An Illustrative Example_2}, where
 Iter. denotes the total number of iterations.

 From  Table \ref{An Illustrative Example_2}, we can see that QASB converges faster than others.
Once $\left|\Ucal^k\right|=0$, QASB terminates, while other methods still need many iterations to terminate.
Overall,
 SSNSB and QASB outperform BM in number of iterations. 
We can also see that our proposed initial interval is significantly narrowed.

\begin{table}
	\centering
	\caption{The iteration information of QASB terminating in 3 iterations.}
	\begin{tabular}{ccccccc}
		\hline
		$k$ & $|\Ucal^k|$&$l^k$&$\lambda_Q^k$&$\lambda^{k+1}$&$\lambda_S^k$&$r^k$\\ \hline
		0 & 50&0&0.5747&1.4858&1.9600&1.9610\\
		1 & 6&1.4858&1.8180&1.8846&1.9512&1.9600\\
		2 & 1&1.8846&1.9271&1.9300&1.9329&1.9512\\
		3 & 0&1.9271&-&1.9292&-&1.9300\\ \hline
	\end{tabular}
	\label{An Illustrative Example}
\end{table}

\begin{table}
	\centering
	\caption{$|\Ucal|$ of three algorithms on an example of Type I with $n=100$.
	We use our proposed initial interval  $[\tilde\lambda,\hat\lambda]$ for BM, SSNSB and QASB.
	We also test BM using this  interval versus using $[0, v_{\max}]$.
	 }
	\begin{tabular}{ccccc}
		\hline
		$k$ &BM $[0, v_{\max}]$ & BM $[\tilde\lambda, \hat \lambda ]$ & SSNSB&QASB\\
		\hline
		0 &100 &50 &50&50\\
		1 &15 &10 &8&6\\
		2 &12 &6 &1&1\\
		3 &7 &2 &0 &0\\
		4 &5 &2 &0 &\\
		5 &5 &1 &0 &\\
		6 &1 &0 &0 &\\
		7 &0 &0 &  &\\
		$\vdots$ & $\vdots$ & $\vdots$ &   &    \\
        31&0 &0 &  & \\
        32& 0 &  & & \\	\hline
	  Iter. &32 &31 &6&3\\ \hline
	\end{tabular}
	\label{An Illustrative Example_2}
\end{table}

\textbf{Number of Iterations }  We also test the total number of iterations needed by each algorithm for $\bv$ with increasing dimension
 $n=10^3,10^5,10^7$.
 The initial interval for all algorithms are set as $[\tilde \lambda, \hat\lambda]$.
 The results are summarized in Table \ref{steps}.


\begin{table}[t]
	\centering
	\caption{Iteration steps for solving zero point of $\phi$.}
	\begin{tabular}{cccccccccc}
		\hline
		&\multicolumn{3}{c}{Type I} & \multicolumn{3}{c}{Type II} & \multicolumn{3}{c}{Type III}\\
		$n$&BM&SSNSB&QASB&BM&SSNSB&QASB&BM&SSNSB&QASB\\
		\hline
	$10^3$ & 31.0 & 6.1 & \textbf{4.0} & 30.0 & 6.4 & \textbf{3.8} & 30.0  & 6.6 & \textbf{4.0}\\
	$10^5$ & 32.0 & 6.9 & \textbf{4.6} & 30.0 & 6.2 & \textbf{5.4} & 30.0 & 3.6 & \textbf{5.3}\\
	$10^7$ & 32.0 & 7.0 & \textbf{6.0} & 31.0 & \textbf{6.4} & 6.5 & 30.0 & 6.3 &\textbf{6.1}\\
		\hline
	\end{tabular}
	\label{steps}
\end{table}

We can see from Table \ref{steps} that for  three     types of data, the total number of iterations needed by SSNSB and  QASB are all significantly less than BM, and  QASB needs usually two or three steps less than SSNSB.  Another aspect to notice is that the numbers of iterations needed by SSNSB and  QASB  remain stable for different
sizes of problems.

\textbf{Computation Efficiency}
The  CPU time (s) for each method is shown in Table \ref{Computation Efficiency1}, where we only record  computational time for  $\phi=0$ and
$N_z$ denotes  the number of non-zero elements of the obtained solution.

\begin{table}[t]
	\centering
	\caption{Computation time for for solving zero point of $\phi$.}
	\begin{tabular}{ccccc}
		\hline
		& $n$     &$10^3$        & $10^5$             &$10^7$\\
		 & $N_z$  &  3.0e+1 & 1.9e+3& 1.8e+5 \\
		\hline
		\multirow{4}{*}{Type I}
		&FS& 1.7e$-$4(8e$-$6) & 1.4e$-$2(2e$-$3) & 1.7e$+$0(2e$-$1)\\
		&BM& 6.2e$-$5(4e$-$6) & 3.6e$-$3(6e$-$4) & 3.9e$-$1(4e$-$2)\\
		&SSNSB& 2.9e$-$5(1e$-$6) &\textbf{1.3e$-$3}(3e$-$4) &\textbf{1.9e$-$1}(2e$-$2) \\
		&QASB& \textbf{2.8e$-$5}(1e$-$6) &\textbf{1.3e$-$3}(2e$-$4) &\textbf{1.9e$-$1}(2e$-$2)\\
		\hline
		\multirow{4}{*}{Type II}
		&FS& 1.8e$-$4(2e$-$5) & 1.7e$-$2(4e$-$3) & 1.6e$+$0(1e$-$1)\\
		&BM& 6.7e$-$5(1e$-$5) & 4.0e$-$3(1e$-$3) & 3.8e$-$1(2e$-$2)\\
		&SSNSB& 3.0e$-$5(4e$-$6) & \textbf{1.5e$-$3}(4e$-$4) & 1.5e$-$1(1e$-$2)\\
		&QASB& \textbf{2.7e$-$5}(4e$-$6) & \textbf{1.5e$-$3}(4e$-$4) & \textbf{1.4e$-$1}(1e$-$2)\\
		\hline
		\multirow{4}{*}{Type III}
		&FS& 1.2e$-$4(4e$-$5)&1.5e$-$2(3e$-$3)&1.6e$+$0(6e$-$2)\\
		&SSNSB& 2.2e$-$5(8e$-$6)&\textbf{1.4e$-$3}(3e$-$4)&1.6e$-$1(2e$-$2)\\
		&QASB& \textbf{2.1e$-$5}(9e$-$6)&\textbf{1.4e$-$3}(2e$-$4)&\textbf{1.3e$-$1}(2e$-$2)\\
		\hline
	\end{tabular}
	\label{Computation Efficiency1}
\end{table}

 Table \ref{Computation Efficiency1} indicates that FS always takes the longest computational time,
 which is mainly due the sorting procedure in this method.  Therefore, it cannot achieved faster observed
 complexity than $O(n \log n)$.
 In particular, for large-scale cases ($n=10^7$),
 QASB is substantially outperforms FS and BM.
 Compared with SSNSB, QASB has obvious advantage in the number of iterations, yet the
 computational time may not be  always superior.
 The reason behind this observation may be the computational cost per iteration needed by SSNSB is low (simply
 the evaluation of a univariate quadratic function) when $|\Ucal_\lambda|=0$.

\subsection{Projection onto $\mathbb{S}_1^{\text t}\cap\mathbb{S}_2$}

We also apply our methods  on computing the projection of $\bv$ onto $\mathbb{S}_1^{\text t}\cap\mathbb{S}_2$.
In particular, we only consider the situation  $\|\bm{v}\|_1>t\|\bm{v}\|_2$ for a given $t$, in which case the computational effort is spent on  solving $\phi(\lambda)=0$ on $(0, |v|_{\text max})$.
 We compare QASB with   existing algorithms under the situation with   initial interval $(0, |v|_{\text max})$ in the experiment, since it is
 the fastest method from our previous experiments.


The contemporary  algorithms we compare with include:
\begin{itemize}
	\item Alternating Projection method (AP)\cite{Hoyer04}.  
	\item Newton method  (NM1)\cite{Thom15}: 
	 NM1 uses Newton method to iterate to find the zero point of $\phi_1(\lambda)$. (If the Newton step yields an iterate outside of  $[l,r]$, then it proceeds with a bisection iterate $\frac{l+r}{2}$.)
	\item Newton method (NM2)\cite{Thom15}: 
	it solves   $\phi_2(\lambda)=0$, 	a similar method to NM1.
	\item Forward Search method, (FS)\cite{Gloaguen17}: 
	A forward search method for solving $\phi_1(\lambda)=0$ with a sorting procedure.
\end{itemize}


%
%
%

We use the same data types as used in \S~\ref{sec.bbtest} and  compare the CPU time of these four algorithms in Table \ref{Computation Efficiency21} for $n=10^3,10^5,10^7$.
\begin{table}[t]
	\centering
			\caption{
		Comparison of computation times.  Each result is the average of  100 realizations with the standard deviations in the parenthesis. 	
	}
	\label{Computation Efficiency21}
	\begin{tabular}{ccccc} \hline
		&&$n=10^3$&$n=10^5$&$n=10^7$\\ \hline
		\multirow{5}{*}{Type I}
		&FS&7.4e$-$5(3e$-$5)&1.2e$-$2(4e$-$4)&5.2e$+$1(5e$+$0)\\
		&AP&5.7e$-$5(3e$-$5)&6.0e$-$3(3e$-$4)&1.5e+0(6e$-$1)\\
		&NM1&\textbf{1.5e$-$5}(4e$-$5)&\textbf{1.0e$-$3(1e$-$4)}&\textbf{2.6e$-$1}(6e$-$2)\\
		&NM2&2.0e$-$5(4e$-$5)&1.5e$-$3(1e$-$4)&3.4e$-$1(4e$-$2)\\
		&QASB&1.8e$-$5(4e$-$5)&1.8e$-$3(1e$-$4)&3.5e$-$1(9e$-$2)\\
		\hline
		
		\multirow{5}{*}{ Type II}
		&FS&1.7e$-$4(2e$-$5)&1.9e$-$2(3e$-$3)&4.5e$+$1(2e+0)\\
		&AP&1.2e$-$4(2e$-$5)&8.5e$-$3(1e$-$3)&1.2e$+$0(3e$-$1)\\
		&NM1&\textbf{3.5e$-$5}(6e$-$6)&1.8e$-$3(2e$-$4)&2.6e$-$1(4e$-$2)\\
		&NM2&4.3e$-$5(7e$-$6)&2.3e$-$3(4e$-$4)&3.1e$-$1(4e$-$2)\\
		&QASB&\textbf{3.5e$-$5}(5e$-$6)&\textbf{1.7e$-$3}(3e$-$4)&\textbf{2.3e$-$1}(1e$-$2)\\
		\hline
		
		\multirow{5}{*}{ Type III}
		&FS&7.2e$-$5(2e$-$5)&1.2e$-$2(4e$-$3)&3.0e$+$1(1e$+$0)\\
		&AP&6.0e$-$5(2e$-$5)&6.1e$-$3(2e$-$3)&8.5e$-$1(1e$-$1)\\
		&NM1&\textbf{1.8e$-$5}(6e$-$6)&1.3e$-$3(5e$-$4)&1.8e$-$1(2e$-$2)\\
		&NM2&\textbf{1.8e$-$5}(5e$-$6)&1.3e$-$3(5e$-$4)&1.8e$-$1(2e$-$2)\\
		&QASB&\textbf{1.8e$-$5}(4e$-$6)&\textbf{1.1e$-$3}(5e$-$4)&\textbf{1.3e$-$1}(5e$-$3)\\
		\hline		
	\end{tabular}

\end{table}

From Table \ref{Computation Efficiency21}, we can see that
NM1 is the fastest for Type I data, and   QASB is the fastest  for Type II and Type III data.

\section{Conclusions}\label{sec.conclusion}

In this paper, we have proposed, analyzed, and tested a unified approach for computing the projections onto the intersections of
$\ell_1$ and $\ell_2$ balls or spheres.  Novelties of our work is the proposed auxiliary function along with
its properties for characterizing the optimal solutions and a unified approach  for computing the solutions. The proposed approach has
bisection and Newton method implementations that can work with/without sorting.
The worst-case complexity of the proposed methods without sorting are $O(n\log n)$ and
the complexity in practice is observed to be $O(n)$. Our numerical experiments have demonstrated
the efficiency of the proposed methods.

{\bf Acknowledgements}
This research was supported by National Natural Science Foundation of China under Grant
11822103.


\bibliographystyle{spmpsci_unsrt}
\bibliography{reference}   

\end{document}